\titleformat*{\section}{\centering\bfseries\Large}
\titleformat*{\subsection}{\centering\bfseries\large}
\setlist[description]{leftmargin=\parindent,labelindent=\parindent}
\newcommand*{\avint}{\mathop{\ooalign{$\int$\cr$-$}}}
\newcommand*{\haus}{\mathcal{H}}
\newtheorem{theorem}{Theorem}[section]
\newtheorem{corollary}[theorem]{Corollary}
\newtheorem{lemma}[theorem]{Lemma}
\title{Generalization of Klain's Theorem to Minkowski Symmetrization of compact sets and related topics}
\author{Jacopo Ulivelli }
\date{}
\begin{document}
	
\maketitle

\begin{abstract}
	We shall prove a convergence result relative to sequences of Minkowski symmetrals of general compact sets. In particular, we investigate the case when this process is induced by sequences of subspaces whose elements belong to a finite family, following the path marked by  Klain in \cite{6}, and the generalizations in \cite{3} and \cite{11}. 
	We prove an analogous result for fiber symmetrization of a specific class of compact sets.
	The idempotency for symmetrizations of this family of sets is investigated, leading to a simple generalization of a result from Klartag \cite{25} regarding the approximation of a ball through a finite number of symmetrizations, and generalizing an approximation result in \cite{9}\footnote{\textit{Mathematics Subject Classification}. Primary: 52A20,52A38.   Secondary: 52A30,51F15,52A39. \textit{Key words and phrases}. convex body, compact set, Steiner symmetrization, Schwarz symmetrization, Minkowski symmetrization, fiber symmetrization, reflection, Minkowski addition, idempotency.\\ \textit{Email}: jacopo.ulivelli@gmail.com}
\end{abstract}

\section{Introduction}
The tool of symmetrization has played an important role in Mathematics since its very introduction from Steiner of the homonym \textit{Steiner Symmetrization} in the attempt of proving the Isoperimetric Inequality. One of the main features of this tool is that there exist a sequence of hyperplanes such that the corresponding sequence of successive symmetrizations ensure the convergence to a ball independently from the compact convex body we start from. Moreover, Steiner symmetrization preserves the volume of the object we are modifying. Historically, this symmetrization  is employed in standard proofs of not only the isoperimetric inequality, but also of other powerful geometric inequalities, such as the Brunn-Minkowski,  Blaschke -Santalò  or Petty projection inequalities.\\To this day, even though this tool is almost two hundred years old, it still plays a crucial role in mathematical research. For example it has been recently employed by Milman and Yehudayoff \cite{5} in the solution of a long-standing open problem about Affine Quermassintegrals.

The interest toward the convergence of sequences of successive symmetrizations has risen again in the last years thanks to a series of papers focusing on Steiner symmetrization (for example Klain \cite{6},  Bianchi, Burchard, Gronchi and Volcic \cite{11}, Bianchi, Klain, Lutwak, Yang and Zhang \cite{4}, Volcic \cite{19} and the very recent Asad and Burchard \cite{20}) and on Minkowski symmetrization (as Klartag \cite{25}-\cite{28} and Coupier and  Davydov \cite{23}).\\
In Bianchi, Gardner and Gronchi \cite{10}-\cite{3} the authors introduced a wider frame for the study of general symmetrizations, studying the common features and the important properties of these different tools. In particular in \cite{3} they provide a beautiful generalization of Klain's main result in \cite{6} valid for Steiner Symmetrization to many other symmetrizations, including Minkowski's.\\Still, many of these works focus on the behaviour of symmetrization in the class of compact convex sets.

The aim of this paper will be the study of some convergence results in the class of compact sets, mainly on the frame of Minkowski symmetrization and the similar fiber symmetrization. Indeed, as usual the case of compact sets reveals many pathologic issues and interesting developments. Here we will try to present some results in this direction. In particular there are some properties, such as idempotency and invariance on symmetric sets, that we lose once turning to the general compact case. Many of them rely on the behaviour of the Minkowski addition, with which we define Minkowski symmetrization, thus pointing our focus to the study of Minkowski addition of compact set. See  Fradelizi,  Madiman and Marsiglietti \cite{8} for an extensive survey on the subject.

Let us introduce some terminology. Let $\mathcal{E}$ be the class $\mathcal{K}^n_n$ of compact convex bodies with nonempty interior in $\mathbb{R}^n$,  or the class $\mathcal{C}^n$ of compact sets in $\mathbb{R}^n$. Given a subspace $H\subset\mathbb{R}^n$  let $\diamondsuit_H$ denote a symmetrization over $\mathcal{E}$, i.e. any map which associates to every set in $\mathcal{E}$ a set in $\mathcal{E}$ symmetric with respect to $H$. Given a sequence  $\{H_m\}$ of subspaces and $K \in \mathcal{E}$ we consider the sequence of sets\[ K_m=\diamondsuit_{H_m}\dots\diamondsuit_{H_2}\diamondsuit_{H_1} K.\]
For which sequences  $\{H_m\}$ and for which symmetrizations $\diamondsuit_H$ does the sequence $\{K_m\}$ converge for each $K\in\mathcal{E}$? This process depends  on the class $\mathcal{E}$, on the definition of $\diamondsuit_H$ and on the sequence  $\{H_m\}$ (and, in particular, on the dimension of the subspaces). 

The cases which have been studied most are those when $\diamondsuit$ is Steiner, Schwarz or Minkowski symmetrization in  the class $\mathcal{E}=\mathcal{K}_n^n$ and $\{H_m\}_{m \in \mathbb{N}}$ consists of hyperplanes. Some results are available also for more general symmetrizations, for the class of compact sets and for subspaces of any dimension (see \cite{11}, \cite{3}). In this notation, our aim is to extend some of these results to the class $\mathcal{E}=\mathcal{C}^n$.

We will start from the study of Minkowski symmetrization of compact sets. In particular, we will prove a generalization of the following result due to Klain \cite{6}. Here $S_H$ denotes the Steiner symmetrization with respect to a subspace $H$.
\begin{theorem}[Klain]\label{T_1}
	Given $K \in \mathcal{K}_n^n$ and a finite family $\mathcal{F}=\{Q_1,...,Q_l\} \subset \mathcal{G}(n,n-1)$, consider a sequence of subspaces $\{H_m\}_{m \in \mathbb{N}}$ such that for every $m \in \mathbb{N}$, $H_m=Q_j$ for some $1\leq j \leq l$. Then the sequence \[K_m:=S_{H_m}...S_{H_1} K\] converges to a body $L \in \mathcal{K}_n^n$. Moreover, $L$ is symmetric with respect to $Q_j$ for every $Q_j$ which appears infinitely often in the sequence.
\end{theorem}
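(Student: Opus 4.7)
The plan is to combine three classical ingredients: the Blaschke selection theorem for compactness, a monotone functional whose equality case encodes the sought symmetry, and the $L^1$-contractivity of Steiner symmetrization under the symmetric-difference pseudometric, to upgrade subsequential convergence to convergence of the whole sequence. Since each $Q_j\in\mathcal{G}(n,n-1)$ contains the origin, Steiner symmetrization with respect to any $Q_j$ preserves containment in every closed ball centred at $0$. Fixing such a ball $B\supset K$, we have $K_m\subset B$ for all $m$, and Blaschke's theorem yields a non-empty compact set $\mathcal{L}\subset\mathcal{K}_n^n$ of subsequential Hausdorff limits. Moreover each $K_m$, and hence each element of $\mathcal{L}$, has the same (positive) volume as $K$, so $\mathcal{L}\subset\mathcal{K}_n^n$.

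Next, introduce the moment of inertia $I(K)=\int_K|x|^2\,dx$. Because $|x|^2$ is convex and invariant under reflection across every $Q_j$, a fibrewise one-dimensional rearrangement gives $I(S_HK)\leq I(K)$, with equality if and only if $K=S_HK$. Thus $\{I(K_m)\}$ is non-increasing and converges to some $c\geq 0$, and by continuity of $I$ every $L\in\mathcal{L}$ satisfies $I(L)=c$.

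The heart of the argument is to produce a single limit point $L^*\in\mathcal{L}$ symmetric with respect to every $Q_j$ appearing infinitely often in $\{H_m\}$; denote this subfamily $\mathcal{F}_\infty\subseteq\mathcal{F}$. To this end I would pass to a diagonal subsequence $M_\alpha\to\infty$ such that for every $r\geq 0$, $K_{M_\alpha+r}\to L_r$ in $\mathcal{K}_n^n$ and $H_{M_\alpha+r}\to H'_r\in\mathcal{F}$ (eventually constant, since $\mathcal{F}$ is finite). Continuity of $S_H$ then yields $L_{r+1}=S_{H'_{r+1}}L_r$, which together with $I(L_r)=c$ for every $r$ forces $L_r=L_{r+1}$ at each step via the equality case of the Steiner inequality. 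So all $L_r$ coincide with a single $L^*\in\mathcal{L}$ which is $H'_r$-symmetric for every $r\geq 1$. A careful refinement of the diagonal, exploiting the finiteness of $\mathcal{F}_\infty$ and the infinite-occurrence property of its elements, ensures every $Q_j\in\mathcal{F}_\infty$ occurs as some $H'_r$, so $L^*$ is symmetric with respect to every element of $\mathcal{F}_\infty$.

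Finally, Steiner symmetrization is a contraction in the symmetric-difference pseudometric $d_1(A,B)=|A\triangle B|$, since on each fiber it transports two sets of lengths $a,b$ to the symmetric intervals $[-a/2,a/2]$ and $[-b/2,b/2]$, whose symmetric difference has measure $\bigl||A_\ell|-|B_\ell|\bigr|\leq|A_\ell\triangle B_\ell|$. Choose $M_0$ so that $H_m\in\mathcal{F}_\infty$ for all $m>M_0$, possible since the finitely-often elements of $\mathcal{F}$ occupy only a finite initial segment. Using $S_{H_{m+1}}L^*=L^*$,
\[
|K_{m+1}\triangle L^*|=|S_{H_{m+1}}K_m\triangle S_{H_{m+1}}L^*|\leq|K_m\triangle L^*|
\]
for every $m>M_0$, so $\{|K_m\triangle L^*|\}$ is eventually non-increasing. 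Along any subsequence $K_{m_k}\to L^*$ in Hausdorff distance, continuity of Lebesgue measure on $\mathcal{K}_n^n$ gives $|K_{m_k}\triangle L^*|\to 0$, so the full sequence tends to zero in symmetric difference. Since $K_m$ and $L^*$ are full-dimensional convex bodies of common volume $|K|>0$, $L^1$-convergence implies Hausdorff convergence, whence $K_m\to L^*$ and we set $L=L^*$. The principal obstacle is the diagonal extraction above: while the compactness of shifted trajectories is standard, the combinatorial requirement that every $Q_j\in\mathcal{F}_\infty$ appear among the $H'_r$ needs genuine care, especially when the gaps between consecutive occurrences of some $Q_j\in\mathcal{F}_\infty$ in $\{H_m\}$ grow unboundedly.
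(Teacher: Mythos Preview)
The paper does not prove Theorem~1.1; it is quoted as Klain's result from \cite{6} and used, together with its extensions in \cite{3} and \cite{11}, as the starting point for the paper's own contributions on Minkowski and fiber symmetrization of compact sets. So there is no proof in the paper to compare your argument against.

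Evaluating your outline on its own merits: the overall architecture---Blaschke selection, a strictly monotone functional $I$ whose equality case detects symmetry, and the $L^1$-contraction of Steiner symmetrization to upgrade subsequential to full convergence---is correct and is essentially the strategy Klain uses. The difficulty you flag at the end, however, is a genuine gap and cannot be repaired by a ``careful refinement'' of the same diagonal idea. Concretely, take $\mathcal F_\infty=\{Q_1,Q_2,Q_3\}$ and let $\{H_m\}$ consist of isolated occurrences of $Q_2$ and $Q_3$, alternating, separated by runs of $Q_1$ whose lengths tend to infinity (say the $k$-th run has length $2^{2^k}$). For any subsequence $\{M_\alpha\}$ and any fixed $r$, the eventual value $H'_r=\lim_\alpha H_{M_\alpha+r}$ is determined by a window of bounded width around $M_\alpha$; but since consecutive occurrences of $Q_2$ and $Q_3$ are eventually farther apart than any fixed bound, no finite window pattern containing both a $Q_2$ and a $Q_3$ appears infinitely often in $\{H_m\}$. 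Consequently $\{H'_r:r\ge1\}$ omits at least one of $Q_2,Q_3$ no matter how $\{M_\alpha\}$ is chosen, and your $L^*$ is only shown to be symmetric with respect to a proper subset of $\mathcal F_\infty$; the final $L^1$-monotonicity step then fails at the indices where the missing direction is applied.

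The missing idea, which Klain supplies, is to prove that \emph{every} cluster point $L\in\mathcal L$ satisfies $S_{Q_j}L=L$ for every $Q_j\in\mathcal F_\infty$, rather than trying to manufacture a single subsequence that exhibits all directions in a shifted limit. The entry point is the observation that $I(K_{m-1})-I(K_m)\to0$, so along the (infinite) set of times $m$ with $H_m=Q_j$ one has $I(K_{m-1})-I(S_{Q_j}K_{m-1})\to0$, and continuity forces any cluster point of $\{K_{m-1}:H_m=Q_j\}$ to be $Q_j$-symmetric; the remaining work is to show that these special cluster points exhaust $\mathcal L$, which Klain does by an argument tailored to the symmetric-difference metric rather than by diagonal extraction. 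Once every cluster point is symmetric under all of $\mathcal F_\infty$, your final paragraph goes through verbatim.
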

This result has been extended in \cite{3} for the class $\mathcal{K}_n^n$ to Minkowski symmetrization, to fiber symmetrization and also to general symmetrizations satisfying certain properties (the proper definitions will be introduced in the next section). Regarding Steiner symmetrization this result was generalised in \cite{11} to the class $\mathcal{C}^n$.

Our strategy will be to reduce our argument to the convex case so that, starting from the general compact case, we will appropriately compare and link the behaviour of the two different processes. In particular we prove the following.
\begin{theorem}\label{TM}
	Consider $K \in \mathcal{K}^n$ and a sequence of isometries $\{\mathbb{A}_m\}_{m \in \mathbb{N}}$. If the sequence \[K_m=\frac{1}{m} \sum_{j=1}^m \mathbb{A}_j K \] converges, then the same happens for every compact set $C \in \mathcal{C}^n$ such that conv$(C)=K$. Moreover, the two sequences converge to the same limit.
\end{theorem}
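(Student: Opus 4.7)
The plan is to compare, in Hausdorff metric, the average $C_m := \frac{1}{m}\sum_{j=1}^{m}\mathbb{A}_j C$ with $K_m$, and show that $d_H(C_m, K_m) \to 0$; the triangle inequality for the Hausdorff distance then transfers convergence of $K_m$ to $L$ onto the sequence $C_m$. As a preliminary observation, since taking convex hull commutes with Minkowski addition, with positive dilations, and with isometries, one has
\begin{equation*}
\mathrm{conv}(C_m) \;=\; \tfrac{1}{m}\sum_{j=1}^{m}\mathrm{conv}(\mathbb{A}_j C) \;=\; \tfrac{1}{m}\sum_{j=1}^{m}\mathbb{A}_j K \;=\; K_m,
\end{equation*}
so $C_m \subseteq K_m$, and the question reduces to bounding how far inside $K_m$ the set $C_m$ can fail to reach.

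The essential ingredient is a Shapley--Folkman--Starr type estimate for Minkowski sums of compact sets: for every finite family $S_1,\dots,S_m \subset \mathbb{R}^n$,
\begin{equation*}
d_H\Bigl(\textstyle\sum_{j=1}^{m} S_j,\; \sum_{j=1}^{m}\mathrm{conv}(S_j)\Bigr) \;\leq\; \sqrt{n}\,\max_{1\leq j \leq m}\mathrm{diam}(S_j),
\end{equation*}
the critical feature being that the bound is \emph{independent of} $m$ (this is discussed in the survey \cite{8}). Applying the estimate with $S_j = \mathbb{A}_j C$, and using that isometries preserve the diameter, yields
\begin{equation*}
d_H(C_m, K_m) \;\leq\; \frac{\sqrt{n}\,\mathrm{diam}(C)}{m} \;\longrightarrow\; 0,
\end{equation*}
which combined with $K_m \to L$ produces $C_m \to L$ and concludes the argument.

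The main conceptual obstacle is precisely the identification of this quantitative convexification tool: Minkowski addition of compact sets is generally far from commuting with convex hulls, and what saves us is that the discrepancy can be controlled by an $m$-independent quantity depending only on the dimension and on $\mathrm{diam}(C)$, so that the $1/m$ averaging is enough to crush it. Once this bound is in hand the rest of the argument is purely formal; in particular, no structural property of the isometries $\mathbb{A}_j$ is used beyond the invariance of the diameter, which is why the same pattern will later apply to any symmetrization built by averaging over an orbit of uniformly bounded isometric copies.
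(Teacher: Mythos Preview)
Your proof is correct and essentially identical to the paper's own argument: both establish $\mathrm{conv}(C_m)=K_m$ via commutation of convex hull with Minkowski sums and isometries, then invoke the Shapley--Folkman--Starr bound together with the invariance of diameter under isometries to obtain $d_\haus(C_m,K_m)\leq \sqrt{n}\,D(C)/m\to 0$. The only cosmetic difference is that the paper states the SFS inequality in the form $d_\haus(\sum A_j,\mathrm{conv}\sum A_j)\leq\sqrt{n}\max_j D(A_j)$, which is equivalent to your formulation since $\mathrm{conv}\sum A_j=\sum\mathrm{conv}A_j$.
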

The strength of this result reveals itself once that one realises that the Minkowski symmetrization of a set $C \in\mathcal{C}^n$ can be expressed as the mean of two isometries of $C$, i.e. the identity and the reflection with respect to the considered symmetrization subspace. Iterations of this symmetrization can be expressed in an analogous fashion. A similar result has been proven in Theorem 7.4 in \cite{3}, where this property of Minkowski symmetrization was first observed; this result states that if a sequence of subspaces is such that the corresponding sequence of symmetrals converges to a ball for every convex compact set we start from, then the same will happen starting from an arbitrary compact set. Theorem \ref{TM} can be seen as a more general version of such result, because we do not require to converge to a ball and in general it allows us to work with specific sequences too.

This approach is not  strong enough when working with fiber symmetrization. Indeed, as we will point out later, the symmetrization behaves way less predictably and the corresponding sequences do not necessarily converge to convex sets. For this symmetrization we prove convergence only for compact sets that satisfy \[\partial \text{conv} C \subseteq C,\] where $\partial \text{conv} C$ is the boundary of the convex envelope of $C$, providing the following result. For a fixed subspace $H$ we define the corresponding fiber symmetrization of a compact set as the union of the Minkowski symmetrizations of its orthogonal sections with respect to $H$. See equation \eqref{fiber} and the corresponding definition for more details.
\begin{theorem}\label{TKF}
	Let $K \in \mathcal{C}^n$ such that  $\partial \text{conv}(K) \subset K$, let $\mathcal{F}=\{Q_1,...,Q_s\}$ be a family of subspaces of $\mathbb{R}^n$, $1 \leq\text{dim}(Q_i)\leq n-2$, and let $\{H_m\}$ be a sequence such that $H_m \in \mathcal{F}$ for every $m \in \mathbb{N}$. Then the sequence \[K_m:=F_{H_m}...F_{H_1} K\] converges to a convex set $L$, where $L$ is the limit of the same symmetrization process applied to conv$(K)$. Thus $L$ is symmetric with respect to all the subspaces of $\mathcal{F}$ which appear infinitely often in $\{H_m\}$.
\end{theorem}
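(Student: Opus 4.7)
My plan is to reduce the statement to the convex case by showing that under the hypothesis a single fiber symmetrization already convexifies $K$, namely $F_H K = F_H(\text{conv}(K))$, which is convex because fiber symmetrization preserves convexity. Once this is established, the sequences $\{K_m\}$ and $\tilde K_m := F_{H_m}\cdots F_{H_1}\text{conv}(K)$ coincide for every $m \geq 1$, so convergence $\tilde K_m \to L$ with $L$ symmetric with respect to each $Q_i$ appearing infinitely often is just the convex version of the theorem, already proved in \cite{3}. I may harmlessly assume $\text{conv}(K)$ has nonempty interior in $\mathbb{R}^n$, since otherwise $\partial\text{conv}(K) = \text{conv}(K)$ and the hypothesis forces $K = \text{conv}(K)$, reducing the statement to the convex case directly.

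The heart of the argument is the identity $F_H K = F_H(\text{conv}(K))$, which fiberwise amounts to $M_0 K_x = M_0(\text{conv}(K))_x$ for every $x \in H$, where $K_x$ is the orthogonal section at $x$ and $M_0$ denotes Minkowski symmetrization inside the fiber. First I would establish the section-level identity $(\text{conv}(K))_x = \text{conv}(K_x)$: when $x$ lies on the relative boundary of the projection $\pi_H(\text{conv}(K))$ the entire section lies in $\partial\text{conv}(K)\subset K$ and hence coincides with $K_x$; when $x$ lies in the interior of the projection, the relative boundary (in $x + H^\perp$) of the convex set $(\text{conv}(K))_x$ lies in $\partial\text{conv}(K)\subset K_x$, and since a compact convex body of positive relative dimension equals the convex hull of its relative boundary, the identity follows. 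The second ingredient is the lemma that a compact convex set $D$ with $\dim D \geq 2$ satisfies $D - D = \partial D - \partial D$, proved by an intermediate-value argument on chord lengths: for nonzero $c \in D - D$, the lengths of chords of $D$ in direction $c/|c|$ vary continuously from $0$ up to some maximum $\geq |c|$, so some chord has length exactly $|c|$ with its endpoints on $\partial D$. Since the hypothesis $\dim H \leq n - 2$ makes fibers at least two-dimensional, for interior $x$ the section $\text{conv}(K_x)$ has dimension $\geq 2$ and this lemma together with $\partial\text{conv}(K_x)\subset K_x$ yields $\text{conv}(K_x) - \text{conv}(K_x)\subset K_x - K_x$, hence $M_0 K_x = M_0\text{conv}(K_x)$; for boundary $x$ this identity is automatic because $K_x$ is already convex.

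The main obstacle is the chord-length lemma, which fails in dimension one: for a segment $[a,b]$ one has $\partial D - \partial D = \{-(b-a),0,b-a\}$ rather than $[-(b-a),b-a] = D - D$. This is precisely why the hypothesis $\dim H \leq n - 2$ is essential, and it parallels the fact that the analogous question for Steiner symmetrization of compact sets, where fibers are one-dimensional, requires the genuinely different techniques of \cite{11}.
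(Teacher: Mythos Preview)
Your overall strategy is correct and coincides with the paper's: show that a single application of $F_{H_1}$ already yields $F_{H_1}(\text{conv}\,K)$, then invoke the convex case from \cite{3}. The paper packages this first step as Corollary~\ref{cb}, whose fiber part rests on Theorem~\ref{Tb}. Your treatment of the section identity $(\text{conv}\,K)_x=\text{conv}(K_x)$ is in fact more explicit than the paper's rather terse justification of the same point.

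There is, however, a genuine gap in your proof of the key lemma $D-D=\partial D-\partial D$ for compact convex $D$ with $\dim D\geq 2$. The chord-length intermediate-value argument fails as written: maximal chord lengths in a fixed direction need not vary continuously down to $0$. For $D=[0,1]^2$ and $v=(1,0)$, every maximal chord in direction $v$ has length exactly $1$, so your IVT produces no chord of length $1/2$; yet $(1/2,0)=p-q$ with $p=(1/2,0)$ and $q=(0,0)$ both on $\partial D$, so the lemma still holds---just not via that mechanism. The clean proof is the boundary-intersection argument underlying the paper's Lemma~\ref{lb} and Theorem~\ref{Tb}: for $c\neq 0$, the translates $D$ and $D+c$ intersect, have equal positive volume in their common affine hull (so neither strictly contains the other), and since $\dim D\geq 2$ their relative boundaries are connected; hence $\partial D\cap\partial(D+c)\neq\emptyset$, and any $z$ in this intersection gives $c=z-(z-c)$ with both terms in $\partial D$. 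With this repair your argument goes through and is essentially the paper's proof, carried out fiberwise.
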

The case of 1-dimensional sections needs to be treated differently, and in a further work we will prove this result for $n \geq 1$ with the additional hypothesis $\abs{K}>0$, i.e. $K \in \mathcal{C}_n^n$.\\The peculiarity of the case $n=1$ will be clearer after stating Theorem \ref{Tb}, which is instrumental to prove Theorem \ref{TKF}.

The study of this issues will lead us to focus on the following problem: what can we deduce on the Minkowski sum of two compact sets from their boundaries? We will see how this question is strongly related to the idempotency of Minkowski and fiber symmetrizations.\\On this regards we obtain the following result.
\begin{theorem}\label{Tb}
	Let $K,L$ be compact sets with connected boundary such that, for every $x \in \mathbb{R}^n$, neither $K+x$ is strictly contained in $-L$ nor $-L$ is strictly contained in $K+x$. Then, \[K+L=\partial K + \partial L.\]
\end{theorem}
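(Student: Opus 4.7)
The inclusion $\partial K + \partial L \subseteq K + L$ is immediate. For the converse, I would fix $z \in K + L$ and set $M := z - L$; then $M$ is compact with connected boundary $\partial M = z - \partial L$, and writing $z = a + b$ with $a \in \partial K$, $b \in \partial L$ amounts to finding a point in $\partial K \cap \partial M$. Since $z \in K + L$ one has $K \cap M \neq \emptyset$, and applying the non-containment hypothesis at $x = -z$ translates into $K \not\subsetneq M$ and $M \not\subsetneq K$. The problem therefore reduces to the following purely topological claim: two compact sets with connected boundaries, with nonempty intersection and no strict mutual containment, must have intersecting boundaries.

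I would prove this claim by contradiction, assuming $\partial K \cap \partial M = \emptyset$. A preliminary observation is that $K$ itself (and similarly $M$) is connected, since a compact set with more than one component has disconnected boundary. Because $\partial K$ is connected and disjoint from $\partial M$, it lies in a single connected component of $\mathbb{R}^n \setminus \partial M$, which is entirely contained either in $\operatorname{int}(M)$ or in $\mathbb{R}^n \setminus M$; the symmetric dichotomy for $\partial M$ produces four cases. In the two asymmetric cases the decomposition $K = (K \cap \operatorname{int}(M)) \sqcup (K \setminus M)$ into relatively open subsets of $K$, together with connectedness of $K$, forces $K \subsetneq M$ (or dually $M \subsetneq K$), contradicting the hypothesis. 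In the case $\partial K \cap M = \emptyset = \partial M \cap K$ the same decomposition yields $K \subseteq \operatorname{int}(M)$, whence $\partial K \subseteq M$, contradicting $\partial K \cap M = \emptyset$.

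The remaining case, in which $\partial K \subseteq \operatorname{int}(M)$ and $\partial M \subseteq \operatorname{int}(K)$ simultaneously, is the main obstacle: here no separation argument applied to $K$ or $M$ individually seems to work, as the two sets can be topologically entangled. The decisive move I anticipate is to pass to the union $K \cup M$. Combining the elementary inclusion $\partial(K \cup M) \subseteq \partial K \cup \partial M$ with $\partial K \cup \partial M \subseteq \operatorname{int}(K) \cup \operatorname{int}(M) \subseteq \operatorname{int}(K \cup M)$ forces $\partial(K \cup M) \subseteq \operatorname{int}(K \cup M)$, hence $\partial(K \cup M) = \emptyset$. But $K \cup M$ is a nonempty compact proper subset of $\mathbb{R}^n$, contradicting the connectedness of $\mathbb{R}^n$. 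This disposes of the last case and completes the proof.
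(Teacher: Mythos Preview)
Your argument is correct. The overall architecture coincides with the paper's: both reduce the statement to the topological claim that two compact sets with connected boundaries, nonempty intersection, and no strict mutual containment must have intersecting boundaries (this is isolated in the paper as Lemma~\ref{lb}), and then deduce the theorem by translating $-L$ so as to meet $K$.

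Where you genuinely diverge is in the proof of that topological lemma. The paper argues constructively with paths: it finds points of $\partial K$ inside and outside $L$ by running curves through $L$ and through $\mathbb{R}^n\setminus\operatorname{int}L$, and then uses connectedness of $\partial K$ to force a crossing of $\partial L$. Your proof is instead a clean contradiction via the component structure of $\mathbb{R}^n\setminus\partial M$, culminating in the nice observation that in the ``entangled'' case $\partial K\subseteq\operatorname{int}M$, $\partial M\subseteq\operatorname{int}K$ the union $K\cup M$ would be clopen in $\mathbb{R}^n$. Your route has the advantage of using only connectedness throughout; the paper's curve arguments tacitly invoke path-connectedness of $L$ and of $\mathbb{R}^n\setminus\operatorname{int}L$, which is not automatic from connectedness of the boundary alone. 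On the other hand, the paper's argument is slightly more direct in that it actually locates the intersection point rather than ruling out all alternatives.
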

A similar result was proven recently from  Fradelizi, Làngi, Zvavitch \cite{9} in a more restrictive case. Notice that this does not give us informations about the case $n=1$, which we will address in Lemma \ref{lemmone} and turns out to give us interesting properties on the finite Minkowski addition of bounded sets in $\mathbb{R}$.

The author would like to thank the Referee for the constructive comments and recommendations which helped to improve the readability and quality of the present paper.

\section{Preliminaries}

As usual, $S^{n-1}$ denotes the unit sphere in the Euclidean $n$-space $\mathbb{R}^n$ with Euclidean norm $\norm{\cdot}$. The term $ball$ will always mean an $n$-dimensional Euclidean ball, and the unit ball in $\mathbb{R}^n$ will be denoted by $B^n$. $B(x,r)$ is the open ball with center $x$ and radius $r$. If $x,y \in \mathbb{R}^n$, we write $x\cdot y$ for the inner product. If $x \in \mathbb{R}^n \setminus \{o\}$, then $x^\perp$ is the $(n-1)$-dimensional subspace orthogonal to $x$. $\mathcal{G}(n,i)$ denotes the Grassmanian of the $i$-dimensional subspaces of $\mathbb{R}^n$, $1\leq i \leq n-1$, and if $H \in \mathcal{G}(n,i)$, $H^\perp$ is the $(n-i)$-dimensional subspace orthogonal to $H$. By \textit{subspace} we mean \textit{linear subspace}. Given $x \in \mathbb{R}$, $\lfloor x \rfloor$ is the floor function of $x$.

If $X$ is a set, we denote by conv$X$ its convex envelope, and by $\partial X$ its boundary. If $H \in \mathcal{G}(n,i)$, then $X|H$ is the (orthogonal) projection of $X$ on $H$. If $X$ and $Y$ are sets in $\mathbb{R}^n$ and $t\geq 0$, then $tX:=\{tx:x \in X\}$ and \[X+Y:=\{x+y: x \in X, y \in Y \} \] denotes the \textit{Minkowski sum} of $X$ and $Y$. For $X$ measurable set its volume will be $\abs{X}$. 

When $H \in \mathcal{G}(n,i),$ we write $R_H$ for the \textit{reflection} with respect to $H$, i.e. the map that takes $x\in \mathbb{R}^n$ to $2(x|H)-x$, where $x|H$ is the projection of $x$ onto $H$. If $R_H X=X$, we say that $X$ is $H$-symmetric.

We denote by $\mathcal{C}^n$ the class of nonempty compact subsets of $\mathbb{R}^n$. $\mathcal{K}^n$ will be the class of non empty compact convex subsets of $\mathbb{R}^n$ and $\mathcal{K}_n^n$ is the class of \textit{convex bodies}. A body is a compact subset of $\mathbb{R}^n$ equal to the closure of its interior. In the analogous way we define $\mathcal{C}_n^n$. If $K \in \mathcal{K}^n$, then \[h_K(x):=\sup\{x\cdot y: y \in K\},\] for $x \in \mathbb{R}^n$, defines the \textit{support function} $h_K$ of $K$. With the support function we can define the mean width of a convex body $K$, which is \[w(K):=\frac{1}{\abs{\partial B^n}} \int_{S^{n-1}} (h_K(\nu)+h_K(-\nu)) d\nu,\]
where $\abs{\partial B^n}$ is the $(n-1)$-dimensional measure of the unit sphere in $\mathbb{R}^n$.
The aforementioned spaces $\mathcal{C}^n$ and $\mathcal{K}^n$ are metric spaces with the \textit{Hausdorff metric}, which is given in general for two sets $A,B$ by \[d_\haus(A,B):=\sup\{e(A,B),e(B,A)\}, \] where \[e(A,B):=\sup_{x\in B}d(x,A)\] is the \textit{excess} of the set $A$ from the set $B$, and $d(x,A)$ is the usual distance between a point and a set.
The completeness of such metric spaces is a classic result \cite{17}, we will refer to it as \textit{Blaschke selection Theorem} both in convex and compact context.

Another classical result we will refer to is the Brunn-Minkowski inequality. Given two measurable sets $A,B$ such that $A+B$ is measurable (the sum of measurable set is not always measurable), it states that \[\abs{A+B}^{1/n}\geq \abs{A}^{1/n}+\abs{B}^{1/n},\] where equality holds if and only if one of the following holds: either $A$ is convex with non empty interior and $B$ is a homothetic copy of $A$ (up to removing subsets of volume zero) or both $A$ and $B$ have null measure and lie on parallel hyperplanes. See Gardner \cite{27} for a precise and comprehensive survey on the general case of this inequality.

Given $C \in \mathcal{C}^n, H \in \mathcal{G}(n,i),1\leq i \leq n-1$, we recall the definition of some symmetrizations:
\begin{itemize}
	\item{Schwarz symmetrization:} \[S_H C:=\bigcup_{x \in H} B(x,r_x),\] where $r_x$ is such that for the $(n-i)$-dimensional measure of the sections holds $\abs{B(x,r_x)}=\abs{C \cap (H^\perp+x)}$, and $B(x,r_x) \subset H^\perp +x$. If $\abs{C \cap (H^\perp+x)}=0$ then $r_x=0$ when $C \cap (H^\perp+x) \neq \emptyset$, while when the section is empty then the symmetrization keeps it empty. Observe that thanks to Fubini's Theorem Schwarz symmetrization preserves the volume. 
	
	For $i=n-1$ we have \textit{Steiner symmetrization}.
	
	\item{Minkowski symmetrization:}\[M_H C:=\frac{1}{2}(C+R_H C).\] We will also consider the case $i=0$, which is called the central Minkowski symmetrization \[M_o K=\frac{K-K}{2}.\] 
	
	Notice that for support functions we have the following: \[h_{K}+h_L=h_{K+L} \qquad  \forall K,L \in \mathcal{K}^n,\] thus Minkowski symmetrization preserves mean width.
	
	\item{Fiber symmetrization:} 
	\begin{equation}\label{fiber}
		F_H C:= \bigcup_{x \in H} \left[\frac{1}{2}\left(C\cap(H^\perp+x)\right)+\frac{1}{2}\left(R_H C \cap (H^\perp+x)\right) \right].
	\end{equation}
	Observe that, defining $M_{H^\perp,x}$ the central Minkowski symmetrization with respect to $x$ in $H^\perp+x$ identified with $\mathbb{R}^{n-i}$, we can write \[F_H K=\bigcup_{x \in H} M_{H^\perp,x} (K \cap (H^\perp+x)).\]
	
	\item{Minkowski-Blaschke symmetrization:} If $K$ is a convex compact set we define
	\begin{equation*} 
		h_{\overline{M}_H K}(u):=
		\begin{cases}
			\avint \limits_{S^{n-1}\cap (H^\perp + u)} h_K (v) dv, \quad \text{if } \abs{S^{n-1}\cap (H^\perp + u)}\neq 0 \text{ in } \mathbb{R}^{n-i}\\
			h_K(u), \qquad \text{otherwise.}
		\end{cases}
	\end{equation*}
	for every $u \in S^{n-1}$.
	At the end of Section 3 we will see that we can extend this definition to any compact set using the support function of its convex envelope.
\end{itemize}

Consider a family of bodies $\mathcal{E}$ and a subspace $H \in \mathcal{G}(n,i)$, then an $i$-symmetrization is a map \[\diamondsuit_H:\mathcal{E} \to \mathcal{E}_H ,\] where $\mathcal{E}_H$ are the $H$-symmetric elements of $\mathcal{E}$.

We now state some properties of $i$-symmetrizations. As we will see, the interaction between these properties and eventually their lack can be crucial in the study of convergence. Consider $K, L \in \mathcal{E}$, $H$ a subspace in $\mathbb{R}^n$, the map $\diamondsuit_H$  satisfies the properties of:
\begin{description}
	\item{\textit{Monotonicity}} if $K \subset L \Rightarrow \diamondsuit_H K \subset \diamondsuit_H L$;
	\item{\textit{H-symmetric invariance}} if $R_H K=K \Rightarrow \diamondsuit_H K= K$;
	\item{\textit{Invariance under translations orthogonal to H of H-symmetric sets}} if $R_H K=K, y \in H^\perp \Rightarrow \diamondsuit_H(K+y)=\diamondsuit_H K$.
\end{description}

Notice that these properties hold for Steiner, Minkowski and fiber symmetrizations, while the first and the third hold for Schwarz symmetrization.

\section{Klain's Theorem for Minkowski symmetrization of compact sets}

Two of the main features of Steiner, Schwarz, Minkowski and fiber symmetrizations are the idempotency and the invariance for $H$-symmetric bodies in the class of convex sets. These two properties no longer hold when we switch to the class of generic compact sets.

An immediate example regarding Minkowski symmetrization is the following. Consider in $\mathbb{R}^2$ the compact set $C=\{(-1,0),(1,0)\}$. This set is obviously symmetric with respect to the vertical axis, which we can identify with a subspace $H$. Then we have \[M_H C=\{(-1,0),(0,0),(1,0)\},\] thus the invariance for symmetric sets no longer holds. If we apply again the same symmetrization, \[M_H(M_H C)=\{(-1,0),(-1/2,0),(0,0),(1/2,0),(1,0)\},\] showing that the same happens to idempotency. In Figures \ref{F_2} and \ref{F_3} we see an example for the fiber symmetrization of a compact set in the plane. 

\begin{figure}
	\centering
	
	\begin{tikzpicture}[scale=0.5]
		
		\draw[-latex] (-3.5,0)--(3.5,0) node [below]{$V^\perp$};
		\draw[-latex] (0,-3.5)--(0,3.5) node [left]{$V$};
		\draw[fill=red,opacity=0.3] (-3,3)--(-3,-3)--(0,0)--(3,-3)--(3,3) node [above,black] {$K$}--(0,0)--(-3,3);
		\draw (3,3) node [above,black] {$K$};

		\draw[-latex] (4.5,0)--(11.5,0) node [below]{$V^\perp$};
		\draw[-latex] (8,-3.5)--(8,3.5) node [left]{$V$};
		\draw[fill=red,opacity=0.3] (5,3)--(5,-3)--(7,-1)--(8,-3)--(9,-1)--(11,-3)--(11,3) node [above, black] {$F_V K$}--(9,1)--(8,3)--(7,1)--(5,3);
		\draw (11,3) node [above, black] {$F_V K$};
	\end{tikzpicture}
	\caption{}\label{F_2}
\end{figure}
\begin{figure}
	\centering
	\begin{tikzpicture}[scale=0.5]
		
		\draw[-latex] (-3.5,0)--(3.5,0) node [below]{$V^\perp$};
		\draw[-latex] (0,-3.5)--(0,3.5) node [left]{$V$};
		\draw[fill=red,opacity=0.3] (-3,3)--(-3,-3)--(-1.8,-1.8)--(-1.5,-3)--(-0.6,-1.8)--(0,-3)--(0.6,-1.8)--(1.5,-3)--(1.8,-1.8)--(3,-3)--(3,3) node [above, black] {$F_V^2 K$}--(1.8,1.8)--(1.5,3)--(0.6,1.8)--(0,3)--(-0.6,1.8)--(-1.5,3)--(-1.8,1.8)--(-3,3);
		\draw (3,3) node [above, black] {$F_V^2 K$};
		
		\draw[-latex] (4.5,0)--(11.5,0) node [below]{$V^\perp$};
		\draw[-latex] (8,-3.5)--(8,3.5) node [left]{$V$};
		\draw[fill=red,opacity=0.3] (5,3)--(5,-3)--(5.6666,-2.3333)--(5.75,-3)--(6.3333,-2.3333)--(6.5,-3)--(7,-2.3333)--(7.25,-3)--(7.6666,-2.3333)--(8,-3)--(8.3333,-2.3333)--(8.75,-3)--(9,-2.3333)--(9.5,-3)--(9.6666,-2.3333)--(10.25,-3)--(10.3333,-2.3333)--(11,-3)--(11,3) --(10.3333,2.3333)--(10.25,3)--(9.6666,2.3333)--(9.5,3)--(9,2.3333)--(8.75,3)--(8.3333,2.3333)--(8,3)--(7.6666,2.3333)--(7.25,3)--(7,2.3333)--(6.5,3)--(6.3333,2.3333)--(5.75,3)--(5.6666,2.3333)--(5,3);
		\draw (11,3) node [above, black] {$F_V^3 K$};
	\end{tikzpicture}
	\caption{}\label{F_3}
\end{figure}

If we iterate this process for $C=\{(-1,0),(1,0)\}$, we see that in this case there is no finite degree of idempotency, i.e. there does not exist an index $\ell \in \mathbb{N}$ such that \[M_H^\ell C=M_H^{k+\ell} C\] for every $k \in \mathbb{N}$, where in general \[\underbrace{M_H \dots M_H}_{\ell\text{-times}}:=M_H^\ell.\] Moreover the iterated symmetrals converge to the set given by conv$(C)$. This is the main idea behind the next result, after proving a technical Lemma.
\begin{lemma}\label{Lm}
	Let $K \in \mathcal{C}^n$, $H$ a subspace of $\mathbb{R}^n$. Then
	
	i) for every $v \in \mathbb{R}^n$ \[M_H(K+v)=M_H(K)+v|H,\]
	
	ii) if $K$ is $H$-symmetric, then $K\subseteq M_H K$,
	
	iii) $K=M_H K$ if and only if $K$ is convex and $H$-symmetric.
\end{lemma}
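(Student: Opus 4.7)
The three statements in Lemma \ref{Lm} are essentially unpackings of the definition $M_H K = \tfrac{1}{2}(K + R_H K)$ together with a characterization of convexity. My plan is to treat the three parts in order, using part (ii) as a bridge to part (iii).

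For (i), I would decompose $v = v_1 + v_2$ with $v_1 = v|H \in H$ and $v_2 = v|H^{\perp} \in H^{\perp}$. Since $R_H$ is linear and fixes $H$ while flipping $H^{\perp}$, we have $R_H(K+v) = R_H K + v_1 - v_2$. Plugging into the definition,
\[
M_H(K+v) = \tfrac{1}{2}\bigl((K+v) + R_H K + v_1 - v_2\bigr) = \tfrac{1}{2}(K + R_H K) + \tfrac{1}{2}(v + v_1 - v_2) = M_H K + v_1,
\]
which is the claim. This is routine and should pose no obstacle.

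For (ii), if $R_H K = K$ then $M_H K = \tfrac{1}{2}(K + K)$. For any $x \in K$, write $x = \tfrac{1}{2}(x+x)$ to conclude $x \in \tfrac{1}{2}(K+K) = M_H K$. Thus $K \subseteq M_H K$.

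For (iii), the reverse implication is immediate: if $K$ is convex and $H$-symmetric, then $M_H K = \tfrac{1}{2}(K + K) = K$, since a convex set satisfies $\tfrac{1}{2}(K+K) = K$. For the forward implication, I would first observe that $M_H K$ is \emph{always} $H$-symmetric, because $R_H \circ R_H = \mathrm{id}$ gives $R_H M_H K = \tfrac{1}{2}(R_H K + K) = M_H K$. Hence $K = M_H K$ already forces $K$ to be $H$-symmetric, reducing the identity $K = M_H K$ to $K = \tfrac{1}{2}(K+K)$. The only remaining step is to invoke the standard characterization that $A = \tfrac{1}{2}(A+A)$ if and only if $A$ is convex: the inclusion $\supseteq$ is automatic, while $\subseteq$ says precisely that every midpoint of two points of $A$ lies in $A$, and for a compact set this midpoint-convexity is equivalent to convexity. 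This is the only substantive step; the main obstacle is to decide how much detail to give on this classical equivalence, but for compact sets it is immediate from iterating midpoint-convexity and passing to the closure. Once this is in place, (iii) follows.
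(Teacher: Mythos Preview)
Your proposal is correct and follows essentially the same route as the paper: the same decomposition $v=v|H+v|H^\perp$ for (i), the identity $M_HK=\tfrac12(K+K)$ under $H$-symmetry for (ii), and for (iii) the reduction to midpoint-convexity of a compact set via $K=\tfrac12(K+K)$. The only cosmetic difference is that you justify the $H$-symmetry of $K$ in (iii) by noting that $M_HK$ is always $H$-symmetric, whereas the paper just asserts this directly; both are immediate.
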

\begin{proof}
	The first statement follows from the explicit calculations \[M_H(K+v)=\frac{K+v+R_H(K+v)}{2}=\frac{K+R_H(K)}{2}+\]\[\frac{v|H^\perp+v|H-v|H^\perp+v|H}{2}=M_H(K)+v|H,\] where we used the linearity of the reflections and the decomposition $v=v|H+v|H^\perp$. 
	
	For the second statement, by hypothesis we have that $R_H K=K$, i.e. $R_H(x) \in K$ for every $x \in K$. Then, taking $x \in K$, $(x+R_H(R_H(x)))/2=x \in M_H K$, concluding the proof. 
	
	Consider now $K$ such that $K=M_H K$. Then obviously $K$ must be $H$-symmetric, and $K=R_H K$. Then, for every $x,y \in K$ we have that $(x+y)/2 \in K$, thus for every point $z$ in the segment $[x,y]$ we can build a sequence by bisection such that it converges to $z$. $K$ is compact, henceforth it contains $z$. The other implication is trivial.
\end{proof}
If we consider the iterated symmetral 
\begin{equation}\label{D1}
	K_m:=M_H^m K=\underbrace{M_H...M_H}_\text{m-times} K,
\end{equation}
we notice that the second statement implies that $K_m \subseteq K_{m+1}$ for every $m \in \mathbb{N}$. 

The next Theorem presents the intuition behind the rest of the work. It will actually be an easy corollary of the results we will prove later, but we present it because its proof is selfcontained and it is useful to glimpse the underlying structure of Minkowski symmetrization.

\begin{theorem} \label{T_2}
	Let $K \in \mathcal{C}^n$, $H \in \mathcal{G}(n,i), 1 \leq i \leq n-1$. Then the sequence $K_m$ in \eqref{D1} converges in Hausdorff distance to the $H$-symmetric convex compact set \[L=\text{conv}(M_H K).\]
\end{theorem}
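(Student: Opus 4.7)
The plan is to reduce the statement to the classical fact that iterated Minkowski averages of a compact set converge to its convex envelope. The decisive initial observation is that $K_1=M_H K=\tfrac12(K+R_H K)$ is automatically $H$-symmetric: $R_H K_1=\tfrac12(R_H K+K)=K_1$. By Lemma \ref{Lm}(ii) and an easy induction, every $K_m$ with $m\ge 1$ is $H$-symmetric and satisfies $K_m\subseteq K_{m+1}$. Exploiting this $H$-symmetry the recursion collapses to $K_{m+1}=\tfrac12(K_m+K_m)$, which unrolls to the explicit formula
\[
K_m \;=\; \frac{1}{2^{m-1}}\bigl(\underbrace{K_1+K_1+\cdots+K_1}_{2^{m-1}\text{ times}}\bigr),\qquad m\ge 1.
\]

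Next, I set $L:=\mathrm{conv}(M_H K)$, which is convex, compact, and $H$-symmetric (the convex hull of an $H$-symmetric set is $H$-symmetric). The upper inclusion $K_m\subseteq L$ follows by induction: $K_1\subseteq L$ holds trivially, and if $K_m\subseteq L$ then the convexity of $L$ yields $\tfrac12(K_m+K_m)\subseteq L$. For the reverse inclusion $L\subseteq\overline{\bigcup_m K_m}$, I take $x\in L$ and apply Carathéodory's theorem to write $x=\sum_{j=1}^{n+1}\lambda_j x_j$ with $x_j\in K_1$, $\lambda_j\ge 0$, $\sum_j\lambda_j=1$. I then choose integers $k_j^{(m)}\ge 0$ with $\sum_j k_j^{(m)}=2^{m-1}$ and $k_j^{(m)}/2^{m-1}\to\lambda_j$ (for instance, $k_j^{(m)}=\lfloor 2^{m-1}\lambda_j\rfloor$ for $j\le n$, and $k_{n+1}^{(m)}$ taking up the slack). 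Taking $k_j^{(m)}$ of the $2^{m-1}$ summands in the explicit formula equal to $x_j$, the point $y_m:=\sum_j (k_j^{(m)}/2^{m-1})\,x_j$ belongs to $K_m$, and $y_m\to x$.

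Finally, $\{K_m\}_{m\ge 1}$ is an increasing sequence of compact subsets of the fixed compact set $L$, and by the two inclusions $\overline{\bigcup_m K_m}=L$. A standard property of the Hausdorff distance then gives $K_m\to L$: the excess $e(L,K_m)$ vanishes because $K_m\subseteq L$, while $e(K_m,L)\to 0$ follows from a finite-$\varepsilon/2$-cover of $L$ together with the monotonicity of the sequence. The only genuinely substantive step is the density argument via Carathéodory and dyadic approximation; everything else is bookkeeping about the recursion and the Hausdorff topology.
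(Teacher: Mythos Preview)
Your proof is correct and follows essentially the same route as the paper: both unroll the recursion to $K_m=\tfrac{1}{2^{m-1}}(K_1+\cdots+K_1)$, use Carath\'eodory on $L=\mathrm{conv}(K_1)$, and approximate the barycentric coefficients by dyadic rationals to produce points of $K_m$ converging to any given $x\in L$. Your version is slightly tidier in two places---you absorb the integer slack into the last coefficient $k_{n+1}^{(m)}$ (so no translation of $K$ to contain the origin is needed), and you deduce uniform convergence of $e(K_m,L)\to 0$ from monotonicity plus a finite $\varepsilon/2$-cover rather than from an explicit bound---but these are cosmetic differences.
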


\begin{proof}
	We observe preliminarily that for the properties of convex envelope and Minkowski sum we have  $K_m \subseteq L$ for every $m \in \mathbb{N}$. Then we only need to prove that for every $x \in L$ we can find a sequence $x_m \in K_m$ such that $x_m \to x$. We can represent $K$ as $\bar{K}+v, v \in K$, where $\bar{K}$ contains the origin. Since Minkowski symmetrization is invariant under $H$-orthogonal translations, we can take $v \in H$. 
	
	For every $m$ we have $R_H K_m=K_m$, and thus we can write \[K_{m+1}=M_H K_m=\frac{K_m+K_m}{2}=\frac{\overbrace{{K_1+...+K_1}}^{2^m-\text{times}}}{2^{m}}.\] Considering the aforementioned representation of $K$, $R_H K=R_H \bar{K} + v$, and we have \[K_{m}=\bar{K}_m+v, \quad \text{ where } \bar{K}_m:=M_H^m \bar{K},\] thus we can write every point $y \in K_m$ as $y=\bar{y}+v, \bar{y} \in \bar{K}_m$.
	
	Given $x\in L$, thanks to Carathéodory Theorem there exist $x_k \in K_1, \lambda_k \in (0,1), k=1,...,n+1$ such that $\sum_{k=1}^{n+1} \lambda_i=1$ and \[x=\sum_{k=1}^{n+1} \lambda_k x_k=\sum_{k=1}^{n+1} \lambda_k \bar{x}_k + v,\] where $x_k=\bar{x}_k+v, \bar{x}_k \in \bar{K}_1$ . For every $\lambda_k$ we consider its binary representation \[\lambda_k=\sum_{\ell=1}^{+\infty} \frac{a_{\ell,k}}{2^\ell}, \qquad a_{\ell,k} \in\{0,1\}\]  (we do not consider $\ell=0$ because $\lambda_i < 1$), and its $m$-th approximation given by the partial sum \[\lambda_{m,k}:=\sum_{\ell=1}^{m} \frac{a_{\ell,k}}{2^\ell}=\frac{1}{2^m}\sum_{\ell=1}^m a_{\ell,k} 2^{m-\ell}.\] We notice for later use that $\abs{\lambda_k-\lambda_{m,k}}\leq 1/2^m$.
	
	Calling $q_s:=\lfloor 2^{s}/(n+1) \rfloor$ we now build the sequence \[x_s:=\sum_{k=1}^{n+1} \lambda_{q_s,k} \bar{x}_k+v = \frac{1}{2^{q_s}} \sum_{k=1}^{n+1} \left(\sum_{\ell=1}^{q_s} a_{\ell,k} 2^{q_s-\ell}\right)\bar{x}_k +v,\] where the $2^{s}- q_s(n+1)$ spare terms in $\bar{K}_1$ can be taken as the origin in the sum representing $\bar{K_s}$.
	
	Then we have that every $x_s$ belongs to $K_s$, and \[\norm{x-x_s}=\norm{\bar{x}+v-(\bar{x}_s+v)} \leq \sum_{k=1}^{n+1} \norm{\bar{x}_k} \abs{\lambda_k-\lambda_{q_s,k}}  \leq \frac{1}{2^{q_s}} \sum_{k=1}^{n+1} \norm{\bar{x}_k}\leq (n+1) \frac{\max_{y \in K_1} \norm{y-v}}{2^{q_s}}.\] Clearly $\norm{x-x_s} \to 0$ as $s \to +\infty$, which concludes our proof. 
\end{proof}

As an immediate consequence we have the following result.

\begin{corollary} \label{C_1}
	Let $K \in \mathcal{C}^n$, $H \in \mathcal{G}(n,i), 1 \leq i \leq n-1$. Then we have that the sequence \[K_m:=F_H^m K=\underbrace{F_H...F_H}_\text{m-times} K\] converges in Hausdorff distance to the $H$-symmetric compact set \[L=\bigcup_{x \in H} \text{conv}(F_H K \cap (x+H^\perp)).\]
\end{corollary}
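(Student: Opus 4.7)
The plan is to reduce to Theorem \ref{T_2} applied fiberwise. The crucial structural observation is that fiber symmetrization decouples along fibers: by definition of $F_H$, one has $F_H K \cap (H^\perp + x) = M_{H^\perp,x}(K \cap (H^\perp + x))$ for every $x \in H$, and iterating yields that the fiber of $F_H^m K$ over $x$ is precisely $M_{H^\perp,x}^m(K \cap (H^\perp + x))$. In this way, $K_m$ splits as the union over $x \in H$ of iterated central Minkowski symmetrals, each taken inside the affine subspace $H^\perp + x \cong \mathbb{R}^{n-i}$.

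Next, I apply (the proof of) Theorem \ref{T_2} to each fiber. The argument there goes through verbatim when the symmetrization subspace is a single point, so for each $x \in H$ the fiber sequence $M_{H^\perp,x}^m(K \cap (H^\perp + x))$ converges in Hausdorff distance to $\text{conv}(F_H K \cap (H^\perp + x))$. Assembling these limits over $x$ gives the candidate limit $L$ of the statement, and Lemma \ref{Lm}(ii) applied fiberwise --- each fiber of $F_H K$ being centrally symmetric about the corresponding $x \in H$ --- yields the monotone nesting $K_m \subseteq K_{m+1} \subseteq L$ for all $m \geq 1$.

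To conclude I must upgrade the per-fiber convergence to Hausdorff convergence of the full union. Since $K_m \subseteq L$, only $e(L, K_m) \to 0$ needs to be checked. I would first verify that $L$ is compact: boundedness is immediate from $L \subseteq \text{conv}(F_H K)$, and closedness follows from upper semicontinuity of the multifunction $x \mapsto \text{conv}(F_H K \cap (H^\perp + x))$, itself a consequence of Carathéodory in each fiber together with the compactness of $F_H K$. Once $L$ is known to be compact, a finite $\varepsilon$-net in $L$ combined with the fiberwise convergence --- using the monotone nesting to pick a single index $m$ that simultaneously approximates all net points --- gives $e(L, K_m) \to 0$. The main obstacle is precisely this last step: the fiber convergence is not a priori uniform in $x$, and it is the compactness of $L$ together with the monotonicity of the sequence $K_m$ that make the required uniformization routine.
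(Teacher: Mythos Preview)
Your core strategy matches the paper's: both apply Theorem~\ref{T_2} fiberwise via the identity $F_H^m K\cap(H^\perp+x)=M_{H^\perp,x}^m\bigl(K\cap(H^\perp+x)\bigr)$, and the paper's proof is literally that one sentence and nothing more.

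Where you go further than the paper is in justifying the passage from fiberwise Hausdorff convergence to Hausdorff convergence of the union $K_m\to L$, a point the paper leaves tacit. Your argument via compactness of $L$, the monotone nesting $K_m\subseteq K_{m+1}\subseteq L$, and a finite $\varepsilon$-net is correct. A shorter alternative, and probably what the paper has in mind, is to observe that the explicit estimate at the end of the proof of Theorem~\ref{T_2} gives a convergence rate depending only on $\max_{y\in K_1}\|y-v\|$; applied to the fiber over $x$ (with $v=x$) this quantity is bounded by $D(K)$ uniformly in $x$, so the fiberwise convergence is automatically uniform and global Hausdorff convergence follows at once. Either route completes the proof.
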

\begin{proof}
	Recalling the definition of fiber symmetrization \[F_H K=\bigcup_{x \in H} \frac{1}{2}((K\cap(x+H^\perp))+(R_H K \cap (x+H^\perp)))=\bigcup_{x \in H} M_{H^\perp,x}(K\cap (x+H^\perp)).\] The result is a straightforward application of Theorem \ref{T_2} to the sections of $K$.
\end{proof}

\paragraph{Remark.} In Corollary \ref{C_1} we lose the convexity on the limit, but convexity of its sections still holds, as a consequence of Theorem \ref{T_2}. This property is known, when dim$(H)=n-1$, as \textit{directional convexity} (see \cite{21}). We can extend this concept to \textit{sectional convexity}, that is, fixed a subspace $H$ in $\mathbb{R}^n$ and a set $A$, the convexity of every section $A \cap (x+H^\perp), x \in H$. Then in the previous result the sectional convexity is with respect to the subspace $H$.

We now state Shapley-Folkman-Starr Theorem (\cite{24},\cite{7} Theorem 3.1.2) for using it in the next proof.

\begin{theorem}\label{sfs}[Shapley-Folkman-Starr]
	Let $A_1,...,A_k \in \mathcal{C}^n$. Then \[d_\haus(\sum_{j=1}^k A_j,\text{conv}(\sum_{j=1}^k A_j ) )\leq \sqrt{n} \max_{1\leq j \leq k} D(A_j),\] where $D(\cdot)$ is the diameter function $D(K):=\sup\{\norm{x-y}: x,y \in K\}$.
\end{theorem}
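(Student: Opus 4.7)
The plan is to establish, in order: (i) the factorisation $\operatorname{conv}(A_1+\cdots+A_k)=\operatorname{conv}A_1+\cdots+\operatorname{conv}A_k$; (ii) a reduction of the Hausdorff distance to a one-sided excess; (iii) the Shapley--Folkman lemma, which restricts the ``error'' to at most $n$ indices; and (iv) an averaging argument that extracts the factor $\sqrt{n}$.

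For (i) both inclusions are routine: $\supseteq$ is immediate since $\sum_j\operatorname{conv}A_j$ is convex and contains $\sum_j A_j$, while $\subseteq$ follows by reindexing a convex combination as $\sum_\ell\mu_\ell\bigl(\sum_j a_{\ell,j}\bigr)=\sum_j\bigl(\sum_\ell\mu_\ell a_{\ell,j}\bigr)$. For (ii), since $\sum_j A_j\subseteq\operatorname{conv}(\sum_j A_j)$, the excess $e\bigl(\operatorname{conv}(\sum A_j),\sum A_j\bigr)=0$, and the Hausdorff distance collapses to $\sup_{x\in\sum\operatorname{conv}A_j}d(x,\sum A_j)$. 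For (iii) I would invoke the Shapley--Folkman lemma: every $x\in\sum_j\operatorname{conv}A_j$ admits a representation $x=\sum_j y_j$ with $y_j\in\operatorname{conv}A_j$ and with the ``bad'' set $B:=\{j:y_j\notin A_j\}$ of cardinality at most $n$; for $j\notin B$ the term $y_j$ already lies in $A_j$.

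For (iv), for each $j\in B$ I use Carathéodory to write $y_j=\sum_i\lambda_{ji}a_{ji}$ with $a_{ji}\in A_j$, $\lambda_{ji}\geq 0$, $\sum_i\lambda_{ji}=1$, and let $z_j$ be a random point of $A_j$ with $\mathbb{P}(z_j=a_{ji})=\lambda_{ji}$, independently across $j\in B$. Then $\mathbb{E}[z_j]=y_j$, so the vector $w:=\sum_{j\in B}(y_j-z_j)$ has zero mean and, by independence,
\[\mathbb{E}\|w\|^2=\sum_{j\in B}\mathbb{E}\|z_j-y_j\|^2\leq\sum_{j\in B}D(A_j)^2\leq |B|\max_j D(A_j)^2\leq n\max_j D(A_j)^2.\]
Consequently some deterministic realisation of $(z_j)_{j\in B}$ satisfies $\|w\|\leq\sqrt{n}\max_j D(A_j)$, and then the point $\sum_{j\notin B}y_j+\sum_{j\in B}z_j$ lies in $\sum_j A_j$ within Euclidean distance $\sqrt{n}\max_j D(A_j)$ of $x$, which is the required bound.

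The main obstacle is precisely the $\sqrt{n}$-factor. The naive triangle inequality applied after Shapley--Folkman gives only $|B|\max D(A_j)\leq n\max D(A_j)$; it is the cancellation in $\sum_{j\in B}(y_j-z_j)$, captured here by the variance identity, that gains the square root. Proving the Shapley--Folkman lemma itself — a classical exchange argument on extreme-point representations of $\sum y_j$ inside the polytope $\sum\operatorname{conv}A_j$ — is the other technical input, which I would take for granted since the theorem is only quoted here as a black box for later use.
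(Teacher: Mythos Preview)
The paper does not prove this theorem at all: it is merely stated, with references to Starr~\cite{24} and Schneider~\cite{7} (Theorem~3.1.2), and then invoked as a black box in the proof of Theorem~\ref{TM}. So there is no ``paper's own proof'' to compare your attempt against.

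That said, your outline is correct and is essentially the standard argument. Steps (i)--(iii) are routine; the only substantive point is (iv), where the probabilistic averaging (independent $z_j$ with $\mathbb{E}[z_j]=y_j$, then the variance identity $\mathbb{E}\|\sum_{j\in B}(y_j-z_j)\|^2=\sum_{j\in B}\mathbb{E}\|y_j-z_j\|^2$) is precisely one of the classical ways to improve the naive triangle-inequality bound $n\max_j D(A_j)$ to $\sqrt{n}\max_j D(A_j)$. Your own closing remark already identifies the situation accurately: in this paper the result is quoted, not proved.
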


In Theorem \ref{T_2} we already saw how the convexification effect of Minkowski addition works when we iterate the same symmetrization. Now, with the inequality given by Shapley-Folkman-Starr Theorem, we generalise the former result proving Theorem \ref{TM}.

\begin{proof}[Proof of Theorem \ref{TM}]
	First notice that orthogonal transformations and Minkowski addition commute with convex envelope. Thus for $C_m=\sum_{j=1}^m \mathbb{A}_j C/m$, where $C \in\mathcal{C}^n$ and $\{\mathbb{A}_j\}$ is a sequence of isometries as in the hypothesis,  \[\text{conv}(C_m)= \text{conv} \left( \frac{1}{m}\sum_{j=1}^m \mathbb{A}_j C \right)= \frac{1}{m} \sum_{j=1}^m \mathbb{A}_j \text{conv} (C)= \frac{1}{m} \sum_{j=1}^m \mathbb{A}_j K= K_m.\]
	
	We now apply Shapley-Folkman-Starr Theorem, obtaining \[d_\haus (C_m,K_m)=d_\haus(C_m, \text{conv}C_m) \leq \frac{\sqrt{n}}{m}\max_{1\leq j \leq m} D(\mathbb{A}_j C)=\frac{\sqrt{n}}{m}\max_{1\leq j \leq m} D(C).\]
	$C$ is compact and thus bounded, hence $d_{\haus}(C_m,K_m)\to 0$, completing the proof. In fact the compactness is not necessary and boundedness would suffice, but this is beyond the interest of the present paper.
\end{proof}

\begin{corollary}\label{T_3}
	Let $K$ be a convex compact set and let $\{H_m\}$ be  a sequence of subspaces of $\mathbb{R}^n$ (not necessarily of the same dimension) such that the sequence of iterated symmetrals \[K_m:=M_{H_m}...M_{H_1} K\] converges to a convex compact set $L$ in Hausdorff distance. Then the same happens for every compact set $\tilde{K}$ such that $conv(\tilde{K})=K$, and the sequence $\tilde{K}_m$, defined as $\tilde{K}_m:= M_{H_m}\dots M_{H_1} \tilde{K}$, converges to the same limit $L$.
\end{corollary}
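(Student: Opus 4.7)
The plan is to re-use the Shapley--Folkman--Starr argument from the proof of Theorem \ref{TM} after unfolding the iterated Minkowski symmetrization into a uniformly weighted Minkowski average of $2^m$ isometric copies of $\tilde K$. Since each reflection $R_{H_j}$ is linear (hence commutes with Minkowski sum), and scalar multiplication distributes over Minkowski sum, I would first prove by induction on $m$ the identity
\[ M_{H_m}\cdots M_{H_1}\tilde K \;=\; \frac{1}{2^m}\sum_{\varepsilon \in \{0,1\}^m} \mathbb{A}_\varepsilon \tilde K, \qquad \mathbb{A}_\varepsilon := R_{H_m}^{\varepsilon_m}\cdots R_{H_1}^{\varepsilon_1}, \]
together with the analogous formula for $K_m$, each $\mathbb{A}_\varepsilon$ being an isometry.

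Next, I would note that convex envelope commutes with both isometries and Minkowski sums, so $\text{conv}(\tilde K_m) = K_m$. Applying the Shapley--Folkman--Starr bound (Theorem \ref{sfs}) to the $2^m$ summands then gives
\[ d_\haus(\tilde K_m, K_m) \;=\; d_\haus(\tilde K_m, \text{conv}(\tilde K_m)) \;\leq\; \frac{\sqrt{n}}{2^m}\max_\varepsilon D(\mathbb{A}_\varepsilon \tilde K) \;=\; \frac{\sqrt{n}}{2^m}\, D(\tilde K), \]
where I used that isometries preserve diameter. Since $\tilde K$ is compact we have $D(\tilde K)<\infty$, so the bound decays exponentially. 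Combining with the hypothesis $K_m \to L$ and the triangle inequality yields $\tilde K_m \to L$ in Hausdorff distance.

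The only delicate step is the opening combinatorial expansion: one must check carefully that distributing $M_{H_m}$ inwards through an already expanded average really produces the $2^m$ equally weighted terms indexed by $\{0,1\}^m$, with no accidental coalescence of weights. Once that is set up, the rest of the argument is identical to the proof of Theorem \ref{TM}, with the bonus that the number of summands is $2^m$ rather than $m$, so convergence is in fact geometric. In that sense the corollary is essentially a packaging of the same Shapley--Folkman--Starr estimate used for Theorem \ref{TM}.
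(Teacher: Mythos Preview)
Your proposal is correct and follows essentially the same route as the paper: expand $M_{H_m}\cdots M_{H_1}\tilde K$ as a uniform average of $2^m$ isometric copies, observe that $\text{conv}(\tilde K_m)=K_m$, and apply the Shapley--Folkman--Starr bound to get $d_\haus(\tilde K_m,K_m)\to 0$, which is exactly how the paper reduces the corollary to Theorem~\ref{TM}. Your concern about ``accidental coalescence of weights'' is unnecessary---repeated isometries simply give repeated summands, and Theorem~\ref{sfs} does not require the summands to be distinct.
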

\begin{proof}
	
	We will show that the theorem holds proving that \[d_\haus(\tilde{K}_m, K_m) \to 0\] for $m \to \infty$. 
	
	We can write $K_m$ as the mean of Minkowski sum of composition of reflections of $K$. Indeed, we have 
	\begin{equation*}
		\begin{gathered}
			K_1=\frac{K+R_{H_1} K}{2},\\
			K_2=\frac{K+R_{H_1} K + R_{H_2}(K+R_{H_1} K)}{4}=\frac{K+R_{H_1}K+R_{H_2} K + R_{H_2} R_{H_1} K}{4},\\
			...
		\end{gathered}
	\end{equation*}
	and so on. The same obviously holds for $\tilde{K}_m$. We call these compositions of reflections $\mathbb{A}_j, 1\leq j \leq 2^m$, and defining $A_j:=\mathbb{A}_j \tilde{K}$ we can write \[\tilde{K}_m=\frac{1}{2^m}\sum_{j=1}^{2^m} \mathbb{A}_j \tilde{K}=\frac{1}{2^m}\sum_{j=1}^{2^m} A_j.\]
	
	The proof follows applying Theorem \ref{TM}.
\end{proof}

We now have, as a consequence of Corollary \ref{T_3}, our generalization of Klain's result.

\begin{corollary} \label{MK}
	Let $K \in \mathcal{C}^n$, $\mathcal{F}=\{Q_1,...,Q_s\} \subset \mathcal{G}(n,i), 1\leq i \leq n-1$, $\{H_m\}$ a sequence of elements of $\mathcal{F}$. Then the sequence \[K_m:=M_{H_m}...M_{H_1}K\] converges to a convex set $L$ such that it is the limit of the same symmetrization process applied to $\bar{K}=conv(K)$. Moreover, $L$ is symmetric with respect to all the subspaces of $\mathcal{F}$ which appear infinitely often in $\{H_m\}$.
\end{corollary}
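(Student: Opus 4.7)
The plan is to reduce immediately to the convex case using the transfer principle already established in Corollary~\ref{T_3}. The key observation is that convergence of the iterated Minkowski symmetrization of an arbitrary compact set $K$ along a sequence from a finite family of subspaces can be inferred from convergence of the corresponding sequence applied to $\bar{K}:=\text{conv}(K)\in\mathcal{K}^n$, and the latter is precisely what the convex version of Klain's theorem gives.

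Concretely, I would proceed in two steps. First, let $\bar{K}_m:=M_{H_m}\cdots M_{H_1}\bar{K}$, where each $H_m\in\mathcal{F}=\{Q_1,\dots,Q_s\}$. By the Minkowski-symmetrization analogue of Klain's theorem for compact convex sets, proved in \cite{3} under the same hypothesis that the subspaces lie in a finite family, $\bar{K}_m$ converges in Hausdorff distance to a convex compact set $L$, and $L$ is symmetric with respect to every $Q_j\in\mathcal{F}$ which appears infinitely often in $\{H_m\}$. Note that this invoked result permits the subspaces $Q_j$ to be of any dimension between $1$ and $n-1$, matching our hypothesis. Second, since $\text{conv}(K)=\bar{K}$ by construction, Corollary~\ref{T_3} applies and yields $d_{\haus}(K_m,\bar{K}_m)\to 0$, whence $K_m\to L$ in Hausdorff distance as well.

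The properties of $L$ then transfer automatically: convexity and symmetry with respect to the infinitely-recurring subspaces are inherited from the convex-case statement, and the identification of the limit with the limit of the symmetrization process applied to $\text{conv}(K)$ is built into the reduction.

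The only genuine obstacle is simply to verify that the convex-case Klain-type theorem in \cite{3} is stated in the generality we need (arbitrary finite family $\mathcal{F}\subset\mathcal{G}(n,i)$, not just hyperplanes), and this is indeed the case. Everything else is essentially bookkeeping: we do not need to re-do any convexification argument since Theorem~\ref{TM}, through Corollary~\ref{T_3}, already handles the passage from the convex representative to the general compact set, regardless of which specific sequence of subspaces drives the iteration.
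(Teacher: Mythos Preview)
Your proposal is correct and follows essentially the same route as the paper's own proof: invoke the convex-case Klain theorem for Minkowski symmetrization from \cite{3} applied to $\bar K=\text{conv}(K)$, and then use Corollary~\ref{T_3} to transfer convergence (and the limit) to the compact set $K$. The paper's proof is the same two-line argument, only more tersely stated.
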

\begin{proof}
	The proof follows straightforward from the generalization of Klain Theorem to Minkowski symmetrization of convex sets (cf. \cite{3}, Theorem 5.7) and Corollary \ref{T_3}.
\end{proof}

We can use a similar method to generalise the following classical result from Hadwiger, see for example \cite{7}, Theorem 3.3.5.
\begin{theorem}\label{TH}[Hadwiger]
	For each convex body $K \in \mathcal{K}_n^n$ there is a sequence of rotation means of $K$ converging to a ball.
\end{theorem}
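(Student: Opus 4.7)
The plan is to work with support functions, exploiting that $h_{\rho K}(u) = h_K(\rho^{-1}u)$ for $\rho \in SO(n)$ together with the linearity of the support function under Minkowski averaging. A rotation mean $K_m := \frac{1}{m}\sum_{j=1}^m \rho_j K$ has support function
\[
h_{K_m}(u) = \frac{1}{m}\sum_{j=1}^m h_K(\rho_j^{-1}u),
\]
so the problem reduces to choosing rotations $\{\rho_j\}$ for which this average converges uniformly on $S^{n-1}$ to a constant function; uniform convergence of support functions is equivalent to Hausdorff convergence of the associated convex bodies.

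The value of the constant is forced. Since Minkowski averaging preserves mean width and mean width is rotation-invariant, any ball $rB^n$ arising as a Hausdorff limit of such a sequence must satisfy $r = w(K)/2 = \bar{h}_K$, where $\bar{h}_K := \frac{1}{\abs{\partial B^n}}\int_{S^{n-1}} h_K(\nu)\,d\nu$. The natural choice is to take $\{\rho_j\}$ equidistributed in $SO(n)$ with respect to the normalized Haar measure $\mu$; such sequences exist on any compact metrizable group by standard Weyl-type constructions. For fixed $u \in S^{n-1}$, the map $\rho \mapsto h_K(\rho^{-1}u)$ is continuous on $SO(n)$, and by invariance of $\mu$ its Haar integral equals the mean of $h_K$ over the orbit $SO(n)\cdot u = S^{n-1}$, which is precisely $\bar{h}_K$. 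Equidistribution therefore yields the pointwise convergence $h_{K_m}(u) \to \bar{h}_K$ for every $u$.

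The main technical obstacle is upgrading this pointwise convergence to uniform convergence on $S^{n-1}$, which is what is needed to conclude $K_m \to \bar{h}_K B^n$ in Hausdorff distance. For this I would use that $h_K$ is Lipschitz on $S^{n-1}$ with constant bounded by the diameter of $K$, so the family $\{u \mapsto h_K(\rho^{-1}u)\}_{\rho \in SO(n)}$ is uniformly equicontinuous. A finite $\varepsilon$-net $\{u_1,\dots,u_N\}$ of $S^{n-1}$, combined with the pointwise convergence $h_{K_m}(u_k)\to \bar{h}_K$ at each net point and the equicontinuity estimate, yields via a standard three-$\varepsilon$ argument the uniform bound $\sup_{u\in S^{n-1}}\abs{h_{K_m}(u)-\bar{h}_K}\to 0$, completing the proof. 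An alternative, perhaps more elegant route is to invoke the fact that equidistribution in a compact group produces uniform convergence of averages along any equicontinuous family of continuous test functions, applied here to the family $\{\rho \mapsto h_K(\rho^{-1}u)\}_{u\in S^{n-1}}$.
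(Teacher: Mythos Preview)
The paper does not actually prove this theorem: it is quoted as a classical result of Hadwiger, with a pointer to Schneider's book \cite{7}, Theorem~3.3.5, and is then used as a black box to derive the corollary for compact sets. So there is no ``paper's proof'' to compare against.

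Your argument is correct. The reduction to support functions and the identity $h_{K_m}(u)=\frac{1}{m}\sum_j h_K(\rho_j^{-1}u)$ are standard; the Haar integral of $\rho\mapsto h_K(\rho^{-1}u)$ is indeed $\bar h_K$ by the transitivity of the $SO(n)$-action on $S^{n-1}$; and the upgrade from pointwise to uniform convergence via a finite $\varepsilon$-net works exactly because the functions $u\mapsto h_{K_m}(u)$ share a common Lipschitz constant (each $\rho_j$ is an isometry, so the averaging does not worsen the modulus of continuity of $h_K$). The identification $r=\bar h_K=w(K)/2$ is also right.

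For context, the classical route in Schneider is somewhat different in flavor: rather than fixing an equidistributed sequence in $SO(n)$ from the start, one constructs the rotations iteratively, at each step choosing a rotation that strictly decreases a ``distance-to-a-ball'' quantity (for instance, the oscillation of the support function), and then uses compactness to extract a limit which must be a ball. Your equidistribution argument is arguably cleaner and more direct, at the cost of invoking the existence of equidistributed sequences on compact groups; the iterative construction is more hands-on and avoids that ingredient.
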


Then the next result is obtained combining Theorems \ref{TH} and \ref{TM}.

\begin{corollary}
	For each compact set $C$ such that conv$(C) \in \mathcal{K}_n^n$ there is a sequence of means of isometries $C$ converging to a ball.
\end{corollary}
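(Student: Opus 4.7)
The plan is to combine Hadwiger's theorem (Theorem \ref{TH}) with Theorem \ref{TM} in a direct two-line argument. Given a compact set $C$ with $\mathrm{conv}(C) \in \mathcal{K}_n^n$, set $K := \mathrm{conv}(C)$. Since $K$ is a convex body with nonempty interior, Hadwiger's theorem yields a sequence of rotations $\{\rho_j\}_{j \in \mathbb{N}}$ such that the sequence of rotation means
\[
K_m = \frac{1}{m}\sum_{j=1}^m \rho_j K
\]
converges in Hausdorff distance to some ball $B$.

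The key observation is that rotations are, in particular, isometries of $\mathbb{R}^n$, so the sequence $\{\rho_j\}$ fits the hypotheses of Theorem \ref{TM}. Applying that theorem to $K$ and $C$ (noting that $\mathrm{conv}(C) = K$ by construction and that the mean sequence $K_m$ converges), we conclude that
\[
C_m := \frac{1}{m}\sum_{j=1}^m \rho_j C
\]
converges to the same limit $B$, which is a ball. This exhibits the required sequence of means of isometries of $C$ and completes the argument.

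There is essentially no obstacle here: the corollary is a genuinely immediate consequence of the two cited results. The only point worth a word of comment is the verification that the rotations furnished by Hadwiger's theorem qualify as the isometries $\mathbb{A}_j$ in the statement of Theorem \ref{TM}, which is immediate. All the substantive work — transferring convergence of Minkowski means from a convex body to an arbitrary compact predecessor via the Shapley--Folkman--Starr estimate — has already been carried out in the proof of Theorem \ref{TM}.
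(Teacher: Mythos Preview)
Your proposal is correct and follows precisely the approach the paper itself indicates: the corollary is stated as an immediate combination of Hadwiger's theorem (Theorem~\ref{TH}) and Theorem~\ref{TM}, and you have simply spelled out that combination explicitly. There is nothing to add.
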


\paragraph{Remark.} Corollary \ref{T_3} gives us an answer regarding the possibility of extending the Minkowski-Blaschke symmetrization $\overline{M}_H$ to compact sets. This symmetrization, that we have defined in Section 2 for convex bodies, can be practically seen as the mean of rotations of a compact set $K \in \mathcal{K}^n$ by a subgroup of $SO(n)$, thus can be approximated by \[\frac{1}{N}\sum_{k=1}^N\mathbb{A}_k K,\] where $\{\mathbb{A}_k\}_{k=1}^N \subset \{\mathbb{A}_k\}_{k \in \mathbb{N}}$ is a suitable set of rotations dense in said subgroup.

Indeed, from the definition of $\overline{M}_H$ in terms of the support function, we have that the integral can be approximated by \[\sum_{k=1}^N \frac{h_K(\mathbb{A}_k^{*} x)}{N}=\frac{1}{N}\sum_{k=1}^N h_{\mathbb{A}_k K} (x),\] which corresponds naturally to the Minkowski sum written above. 

Then again, following the proof of Corollary \ref{T_3}, we can write the symmetral as the limit of a mean of Minkowski sum of isometries of a fixed $K \in \mathcal{K}^n$, and thus Minkowski-Blaschke symmetrization actually gives the same result for every $C \in \mathcal{C}^n$ such that conv$(C)=K$.\\
This shows that this symmetrization is sensible only to the extremal points of a set, thus it makes no difference in using it with compact sets or convex sets.

\section{The case of Convex outer boundary}

One of the main properties of Minkowski symmetrization is that, as a consequence of the Brunn-Minkowski inequality, it increases the volume. Indeed, for every measurable set $K \subset \mathbb{R}^n$ such that $\abs{K}>0$ and $M_H K$ is measurable, we have \[\abs{M_H K}^{1/n}=\abs{1/2(K+R_H K)}^{1/n}\geq \frac{1}{2}\abs{K}^{1/n}+\frac{1}{2}\abs{R_H K}^{1/n}=\abs{K}^{1/n},\] where equality holds if and only if  $K$ and $R_H K$ are homothetic convex bodies from which sets of measure zero have been removed. See \cite{27} for a general proof of this result. We work only with compact sets, thus the equality condition is possible only if the two bodies are homothetic and convex. This happens if and only if $K=M_H K$, thus we would like to state that  the iteration of Minkowski symmetrization increases the volume until the sequence of symmetrals reaches $M_H \text{conv} (K)$. 

With Theorem \ref{T_2} we proved that, regardless of the volume, the limit of $\tilde{K}_m$ is actually $M_H \text{conv}(K)$, but now we raise one more question: can we obtain this limit in a finite number of iterations? Under which hypothesis is this possible? 

We start by giving an answer for compact sets of $\mathbb{R}$. This case is more complicated than for similar objects in $\mathbb{R}^n, n \geq 2$, as we will prove later.
\begin{lemma}\label{lemmone}
	Let $K \in \mathbb{R}$ be a compact set such that $\text{conv}(K)=[a,b]$ with the following property:  \[\exists \varepsilon>0 \text{  s.t.  } [a,a+\varepsilon] \subset K  \text{  or  } [b-\varepsilon,b]\subset K.\]
	
	Then there exists an index $\ell \in \mathbb{N}$ depending on $\varepsilon$ and $(b-a)$ such that \[M_o^{\ell} K= M_o^{\ell+k} K\] for every $k\in \mathbb{N}$. 
	
	Moreover, $\ell$ increases with $(b-a)$ and decreases if $\varepsilon$ increases.
\end{lemma}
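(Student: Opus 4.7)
The plan is to reduce, via the translation invariance of $M_o$ from Lemma \ref{Lm}(i), to the case $a=0$, $b=d$, $[0,\varepsilon]\subset K$; the other case is obtained by reflecting $K$ across the midpoint of $[a,b]$, an operation that commutes with $M_o$, so all iterates $M_o^m K$ with $m\ge 1$ are identical. Theorem \ref{T_2} ensures $K_m:=M_o^m K\to L:=[-d/2,d/2]$ in Hausdorff distance, and Lemma \ref{Lm}(iii) says $L$ is a fixed point of $M_o$; the game is thus to exhibit a finite $\ell=\ell(\varepsilon,d)$ with $K_\ell=L$. A direct computation of $K_1=\tfrac12(K-K)$ from $[0,\varepsilon]\subset K$ together with $0,d\in K$ shows that the extreme interval $E:=[d/2-\varepsilon/2,d/2]$ and its reflection $-E$ both lie in $K_1$; since every $K_m$ with $m\ge 1$ is $\{o\}$-symmetric, Lemma \ref{Lm}(ii) yields the monotonicity $K_m\supset K_1$, so $E\cup(-E)\subset K_m$ for all $m\ge 1$. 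These two extreme intervals will serve as permanent anchors.

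The core qualitative step is the covering claim: \emph{if every gap of $K_m$ (every connected component of $L\setminus K_m$) has length at most $\varepsilon/2$, then $K_{m+1}=L$.} Since $K_m=-K_m$ we have $K_{m+1}=\tfrac12(K_m+K_m)$, so it suffices to show $K_m+K_m=[-d,d]$. I will argue that $K_m+E\supset[-\varepsilon/2,d]$: for $x\in[-\varepsilon/2,d]$, one has $x\in K_m+E$ iff the closed length-$(\varepsilon/2)$ interval $I_x:=[x-d/2,\,x-d/2+\varepsilon/2]$ meets $K_m$. When $I_x\subset L$, it cannot be contained in any open gap of length $\le\varepsilon/2$ (such containment would force the gap to be strictly longer than $\varepsilon/2$), so $I_x\cap K_m\neq\emptyset$; when $I_x$ protrudes past $\pm d/2$, it still contains one of the endpoints $\pm d/2\in K_m$. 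Hence $K_m+E\supset[-\varepsilon/2,d]$, and the symmetric argument with $-E$ gives $K_m+(-E)\supset[-d,\varepsilon/2]$, so $K_m+K_m\supset[-d,d]$, as needed.

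To verify the gap hypothesis in finite time I will unwind the recursion $K_{j+1}=\tfrac12(K_j+K_j)$ valid for $j\ge 1$: by induction $K_m=2^{-(m-1)}K_1^{\oplus 2^{m-1}}$, the rescaled Minkowski sum of $2^{m-1}$ copies of $K_1$. Theorem \ref{sfs} with $n=1$ gives $d_\haus\bigl(K_1^{\oplus 2^{m-1}},\mathrm{conv}(K_1^{\oplus 2^{m-1}})\bigr)\le D(K_1)=d$, and rescaling by $2^{-(m-1)}$ produces $d_\haus(K_m,L)\le d/2^{m-1}$. Because $K_m\subset L$, this one-sided Hausdorff bound forces every gap of $K_m$ to have length at most $d/2^{m-2}$. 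The gap hypothesis $d/2^{m-2}\le\varepsilon/2$ therefore holds whenever $m\ge 3+\log_2(d/\varepsilon)$, and the covering claim then yields $K_{m+1}=L$; choosing $\ell:=\lceil 4+\log_2(d/\varepsilon)\rceil$ gives $K_\ell=L=K_{\ell+k}$ for all $k\in\mathbb{N}$, and this explicit formula is nondecreasing in $d=b-a$ and nonincreasing in $\varepsilon$. The main obstacle is the covering claim: it is essentially one-dimensional, relying on the fact that gaps are intervals whose lengths alone dictate what Minkowski sums can refill — exactly the feature that breaks in higher dimensions, explaining why the $n=1$ case requires a separate treatment from the situation surrounding Theorem \ref{TKF}.
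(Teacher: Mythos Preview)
Your proof is correct and takes a genuinely different route from the paper's. The paper works by explicitly tracking the intervals that appear in $M_o^{k}\tilde K$ for the model set $\tilde K=[-M,-m]\cup[m,M]$: an induction shows that $M_o^{k+1}\tilde K$ contains a union of $2^{k+1}+1$ specific subintervals of $[-M,M]$, and a direct computation of when adjacent intervals overlap yields the threshold $m/M\le(2^k-1)/(2^k+1)$, hence $\ell\ge\log_2(4M/\varepsilon-1)$. Your argument is more structural: you invoke the Shapley--Folkman--Starr bound (Theorem~\ref{sfs}) on $K_1^{\oplus 2^{m-1}}$ to get $d_\haus(K_m,L)\le d/2^{m-1}$, read off a uniform bound on gap lengths, and then prove a clean one-step covering lemma anchored on the extreme intervals $\pm E$ to finish. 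The paper's computation yields a slightly sharper constant in $\ell$; your approach avoids the interval-endpoint induction entirely, reuses machinery already present in the paper, and makes explicit why the mechanism is one-dimensional (gaps are intervals whose \emph{length} alone governs whether $K_m+E$ covers them). One minor remark: the appeal to Theorem~\ref{T_2} in your opening is only motivational and is formally out of scope, since that theorem is stated for $1\le i\le n-1$ and so excludes $M_o$ in $\mathbb{R}$; but you never actually use that convergence---your Shapley--Folkman--Starr estimate already delivers it---so this does not affect the argument.
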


\begin{proof}
	First consider the case $K \supseteq \{a\}\cup[b-\varepsilon,b]$. Then \[M_o K \supseteq M_o (\{a\}\cup[b-\varepsilon,b])\supset \left[\frac{a-b}{2},\frac{a-b}{2}+\frac{\varepsilon}{2}\right] \cup \left[\frac{b-a}{2}-\frac{\varepsilon}{2},\frac{b-a}{2}\right].\] Easy calculations show that the same happens when $K \supseteq [a,a+\varepsilon]\cup \{b\}$. Then, naming \[M:=\frac{b-a}{2}, \quad m:=\frac{b-a}{2}-\frac{\varepsilon}{2},\] and we can work with a set containing a subset the form \[ [-M,-m] \cup [m,M]=:\tilde{K}, \] where $M-m=\varepsilon/2$.
	
	If now we apply the symmetrization, we obtain 
	\begin{equation}\label{k0}
		M_o K\supseteq [-M,-m]\cup \left[ \frac{m-M}{2},\frac{M-m}{2}\right]\cup [m,M]=M_o \tilde{K}.
	\end{equation}
	If $(M-m)/2\geq m$, that is $m \leq M/3$, then $M_o K=\text{conv}(K)$, and the result holds with $\ell=1$.
	
	In the general case we can show by induction that the following inclusion holds
	\[ M_o^{k+1} K \supseteq M_o^{k+1} \tilde{K} \supseteq \bigcup_{j=0}^{2^{k+1}} \left[ \frac{(2^{k+1}-j)m-jM}{2^{k+1}},\frac{(2^{k+1} -j)M-jm}{2^{k+1}} \right],\] where the first inclusion is trivial thanks to the monotonicity of Minkowski symmetrization. In particular we will show that \[M_o^{k+1}\tilde{K}\supseteq M_o^{k}\tilde{K}\cup \bigcup_{j=1}^{2^{k}} \left[\frac{(2^{k+1}-2j+1)m-(2j-1)M}{2^{k+1}}, \frac{(2^{k+1}-2j+1)M-(2j-1)m}{2^{k+1}} \right], \] which is the desired set. This inclusion is actually an equality, but proving this fact is beyond our goal here.
	
	For $k=0$ we have already seen in (\ref{k0}) that the inclusion holds. By inductive hypothesis, at the $(k+1)$-th step the means of adjacent intervals of $M_o^k \tilde{K}$ are given by
	\begin{equation*}
		\begin{gathered}
			\frac{1}{2}\left[\frac{(2^k-(j+1))m-(j+1)M}{2^k},\frac{(2^k-(j+1))M-(j+1)m}{2^k}\right]+ \\ \frac{1}{2}\left[ \frac{(2^k-j)m-jM}{2^k},\frac{(2^k -j)M-jm}{2^k} \right] \\= \left[\frac{(2^{k+1}-2(j+1)+1)m-(2(j+1)-1)M}{2^{k+1}},\frac{(2^{k+1}-2(j+1)+1)M-(2(j+1)-1)m}{2^{k+1}} \right]
		\end{gathered}
	\end{equation*}
	for every $j=0,...,2^{k}-1$, giving us the elements of the union with odd indices.
	
	Observe that $M_o^{k}\tilde{K}$ is invariant under reflection. Thus, thanks to Lemma \ref{Lm} and the monotonicity of Minkowski symmetrization, we have $M_o^{k}\tilde{K} \subseteq M_o^{k+1} K$, concluding the induction. 
	
	Taking at the $k$-th step two adjacent intervals, we have that they are connected if \[\frac{(2^k-(j+1))M-(j+1)m}{2^k}\geq \frac{(2^k-j)m-jM}{2^k}.\]
	It follows that the condition for filling the whole segment conv$(M_H^k K)$ is \[\frac{m}{M}\leq\frac{2^k-1}{2^k+1}.\] Observe that the dependence on the index $j$ disappeared after calculations, confirming that this holds for every couple of adjacent intervals.
	
	By hypothesis $M-m=\varepsilon/2$ and $(2^k-1)/(2^k+1) \to 1 $. We have \[\frac{m}{M}=1+\frac{m-M}{M}=1-\frac{\varepsilon}{2M}, \] then there exists $\ell \in \mathbb{N}$ such that \[1-\frac{\varepsilon}{2M} < \frac{2^\ell-1}{2^\ell+1},\] thus $M_o^\ell K= \text{conv}(K)$ for\[\ell \geq \log_2 \left(\frac{4M}{\varepsilon}-1 \right).\] This set is convex and $o$-symmetric, thus is invariant under Minkowski symmetrization. The dependence from $M$ and $\varepsilon$ is clear from the last inequality.
\end{proof}

Notice that it is crucial that either $a$ or $b$ belong to an interval with positive measure contained in $K$. Indeed, if the two extremes both were isolated points there would occur a situation analogous to the example presented at the beginning of Section 3, thus there would be a part of the set which stabilises itself only at the limit. 

\paragraph{Remark.} With wider generality the previous Lemma holds for the means of Minkowski sums. Indeed, if $K \subset \mathbb{R}$, for every $x \in \mathbb{R}$ holds \[\frac{1}{m}\sum_{j=1}^m(K-x)=\frac{1}{m}\sum_{j=1}^m K -x,\] and taking $x$ as the mean point of the extreme points of $K$ we reduce ourself to the same context of the Lemma, which can be restated as follows.

\begin{lemma}\label{L_3}
	Let $K \in \mathbb{R}$ be a compact set such that $\text{conv}(K)=[a,b]$ with the following property:  \[\exists \varepsilon>0 \text{  s.t.  } [a,a+\varepsilon] \cup [b-\varepsilon,b]\subset K.\]
	
	Then there exist an index $\ell \in \mathbb{N}$ depending on $\varepsilon$ and $(b-a)$ such that \[\frac{1}{2^\ell}\sum_{j=1}^{2^\ell} K=\frac{1}{2^{\ell+k}}\sum_{j=1}^{2^{k+\ell}} K\] for every $k\in \mathbb{N}$.
	
	Moreover, $\ell$ increases with $(b-a)$ and decreases if $\varepsilon$ increases.
\end{lemma}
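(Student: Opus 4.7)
The plan is to reduce Lemma \ref{L_3} to Lemma \ref{lemmone} through centering, a symmetric-subset trick, and monotonicity of Minkowski addition. The key observation is that for an $o$-symmetric compact set on the line, the $\ell$-th iterate of the central Minkowski symmetrization coincides with the $2^\ell$-fold Minkowski average; so if one can exhibit a suitable $o$-symmetric subset $L\subset K$ to which Lemma \ref{lemmone} applies, the finite-stabilisation conclusion will transfer from $M_o^\ell L$ to the Minkowski average of $K$.

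First I would use the identity in the preceding remark to translate $K$ by $c:=(a+b)/2$, reducing to the case $\text{conv}(K)=[-M,M]$ with $M:=(b-a)/2$, so that the hypothesis reads $[-M,-M+\varepsilon]\cup[M-\varepsilon,M]\subset K$. Then I would set
\[
L:=[-M,-M+\varepsilon]\cup[M-\varepsilon,M]\subset K,
\]
which is $o$-symmetric. A straightforward induction on $\ell$ shows that each $M_o^\ell L$ remains $o$-symmetric, so that
\[
M_o^\ell L=\frac{1}{2^\ell}\sum_{j=1}^{2^\ell}L.
\]
Applying Lemma \ref{lemmone} to $L$ (either end-interval verifies its hypothesis) yields an index $\ell_0$, depending only on $\varepsilon$ and $M$ through the explicit bound $\ell_0\geq\log_2(4M/\varepsilon-1)$, with $M_o^{\ell_0}L=\text{conv}(L)=[-M,M]$. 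This bound is exactly what delivers the claimed monotone dependence of $\ell$ on $b-a$ and on $\varepsilon$.

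To close the argument I would invoke monotonicity of Minkowski addition: from $L\subset K\subset[-M,M]$ it follows that, for $\ell\geq\ell_0$,
\[
[-M,M]=\frac{1}{2^\ell}\sum_{j=1}^{2^\ell}L\subset\frac{1}{2^\ell}\sum_{j=1}^{2^\ell}K\subset\text{conv}(K)=[-M,M],
\]
forcing equality. Since $[-M,M]$ is convex, $([-M,M]+[-M,M])/2=[-M,M]$, and a trivial grouping of the $2^{\ell+k}$ summands into $2^k$ blocks of size $2^\ell$ then gives $\frac{1}{2^{\ell+k}}\sum K=\frac{1}{2^\ell}\sum K=[-M,M]$ for every $k\in\mathbb{N}$. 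Untranslating by $c$ recovers the statement for the original $K$.

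The only genuinely delicate point, and the reason for introducing $L$ rather than working directly with $K$, is the identification of $M_o^\ell L$ with the Minkowski average $\frac{1}{2^\ell}\sum_{j=1}^{2^\ell}L$: for a generic non-symmetric compact set on the line the two processes $M_o^\ell(\cdot)$ and $\frac{1}{2^\ell}\sum(\cdot)$ behave quite differently, and it is only the $o$-symmetry of $L$ that lets Lemma \ref{lemmone} speak about Minkowski averages. This is precisely why the hypothesis of Lemma \ref{L_3} demands both end-intervals in $K$, while Lemma \ref{lemmone} required only one.
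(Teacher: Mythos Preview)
Your proof is correct and follows essentially the same route as the paper: translate by the midpoint $c=(a+b)/2$, pass to the $o$-symmetric subset $\tilde K=[-M,-m]\cup[m,M]$ (your $L$), identify its iterated central symmetrals with Minkowski averages, and invoke the iteration from Lemma~\ref{lemmone}. The paper's proof is a three-line sketch that points to exactly this reduction; your version simply makes explicit the monotonicity sandwich $L\subset K\subset[-M,M]$ and the fact that for an $o$-symmetric set $M_o^\ell L=\tfrac{1}{2^\ell}\sum L$, which is the bridge between Lemma~\ref{lemmone} and the Minkowski-average formulation here.
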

\begin{proof} First we remind the reader that, as we have seen in Theorem \ref{T_2}, when we iterate $M_H$, after the first symmetrization we are just computing the mean \[\frac{1}{2^{m-1}}\sum_{j=1}^{2^{m-1}} M_H K = M_H^{m} K.\]
	Moreover, we observe that the only difference with the previous Lemma is that we do not have the sum with the reflection, so we have to require in the hypothesis that both the end-points of $K$ belong to segments included in $K$.\\
	Now we can work with a set \[\tilde{K}:=([-M,-m] \cup [m,M] )+ x\] for a suitable $x \in \mathbb{R}$, and the rest of the proof follows as the previous one.
\end{proof}

A weaker property that these sets have is to contain the boundary of their convex envelope. When $n \geq 2$, this is enough to prove the stronger and more general result in Corollary \ref{cb}.

\begin{lemma}\label{lb}
	Let $K,L \in \mathcal{C}^n$ such that $\partial K$, $\partial L$ are connected and $K \cap L \neq \emptyset$. If neither $L$ is strictly contained in $K$ nor $K$ is strictly contained in $L$, then there exists $z \in \partial K \cap \partial L$.
\end{lemma}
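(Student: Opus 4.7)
The plan is to argue by contradiction: suppose $\partial K \cap \partial L = \emptyset$. Since $\partial K$ is connected and $\mathbb{R}^n \setminus \partial L = \mathrm{int}(L) \sqcup (\mathbb{R}^n \setminus L)$ is a disjoint union of two open sets, $\partial K$ must lie entirely in one of them; the same dichotomy, with the roles of $K$ and $L$ swapped, applies to $\partial L$. This yields four exhaustive cases, each of which I aim to contradict using either the hypothesis $K \cap L \neq \emptyset$, the boundedness of $K \cup L$, or the fact that a nonempty compact subset of $\mathbb{R}^n$ has nonempty boundary.

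The first ``easy'' case is $\partial K \subseteq \mathrm{int}(L)$ and $\partial L \subseteq \mathrm{int}(K)$: using $\partial(K \cup L) \subseteq \partial K \cup \partial L \subseteq \mathrm{int}(K) \cup \mathrm{int}(L) \subseteq \mathrm{int}(K \cup L)$, I would conclude $\partial(K \cup L) = \emptyset$, so $K \cup L$ is a nonempty bounded clopen subset of $\mathbb{R}^n$, impossible. The other ``easy'' case is $\partial K \cap L = \emptyset$ and $\partial L \cap K = \emptyset$: any $z \in K \cap L$ cannot lie on either boundary and so must lie in $\mathrm{int}(K) \cap \mathrm{int}(L)$. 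Hence $K \cap L = \mathrm{int}(K) \cap \mathrm{int}(L)$ is both open and closed, nonempty and bounded, again impossible.

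The two mixed cases, symmetric under $K \leftrightarrow L$, will be the main obstacle. Consider $\partial K \subseteq \mathrm{int}(L)$ and $\partial L \cap K = \emptyset$. My plan is to show $K \subseteq L$, which contradicts the hypothesis (the only way out, $K = L$, would force $\partial K = \partial L = \emptyset$, impossible for a compact nonempty set). To establish the inclusion, I would suppose $x \in K \setminus L$; because $\partial K \subseteq L$, one must have $x \in \mathrm{int}(K)$. Let $U$ be the connected component of $\mathbb{R}^n \setminus L$ containing $x$. The key technical step is to check that $U \cap K$ is clopen in $U$: it is closed because $K$ is closed, and open because any $y \in U \cap K$ cannot lie on $\partial K \subseteq L$ and so belongs to $\mathrm{int}(K)$. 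Connectedness of $U$ then gives $U \subseteq K$, hence $\overline{U} \subseteq K$ and $\partial U \subseteq K$; but $\partial U \subseteq \partial L \subseteq \mathbb{R}^n \setminus K$, so $\partial U = \emptyset$, which is impossible for a nonempty, bounded, proper open subset of $\mathbb{R}^n$. This contradiction closes the mixed case and hence the proof.
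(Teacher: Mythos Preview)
Your proof is correct and follows a genuinely different route from the paper's. The paper argues directly: after disposing of $K=L$, it uses the connectedness of $L$ to produce a point of $\partial K$ inside $L$ (via a curve in $L$ from a point of $K\cap L$ to a point of $L\setminus K$), then the connectedness of $\mathbb{R}^n\setminus\text{int}\,L$ to produce a point of $\partial K$ outside $L$ (via a curve from a point of $K\setminus L$ out to a large sphere), and finally the connectedness of $\partial K$ to force a crossing with $\partial L$. Your argument instead proceeds by contradiction and a topological dichotomy: once $\partial K$ misses $\partial L$, connectedness of $\partial K$ forces $\partial K\subseteq\text{int}(L)$ or $\partial K\cap L=\emptyset$, and symmetrically for $\partial L$; each of the four resulting cases is then eliminated by a clopen argument in $\mathbb{R}^n$ (for $K\cup L$, for $K\cap L$, or for a component $U$ of $\mathbb{R}^n\setminus L$). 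The paper's approach is more geometric and is phrased with an eye toward the extension, mentioned right after the lemma, to outer boundaries of the external connected components. Your approach is cleaner on the point-set side: it avoids any implicit appeal to path-connectedness (the paper's ``curves'' in $L$ and in $\mathbb{R}^n\setminus\text{int}\,L$) and makes the role of compactness completely transparent, since every contradiction reduces to the single fact that a nonempty bounded subset of $\mathbb{R}^n$ cannot be clopen.
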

\begin{proof}
	First note that in general if $K$ is a closed set and $\partial K$ is connected then $K$ is connected. Moreover, $\mathbb{R}^n \setminus \text{int}K$ is connected too. 
	
	Observe that if $K=L$ then $\partial K \cap \partial L = \partial K = \partial L \neq \emptyset$ and there would be nothing to prove, so we will work in the hypothesis $K \neq L$.
	
	We start proving that $\partial K \cap L \neq \emptyset$. Indeed, there exist $y \in L \setminus K$ and $x \in K \cap L$. Then, since $L$ is connected, there exists a continuous curve $\gamma$ joining $x,y$. Now, $\gamma$ must cross $\partial K \cap L$ going from one end ($x$, inside $K$) to the other ($y$, outside $K$) in a point $u$ which belongs to the required intersection.
	
	Now we prove that $\partial K \setminus L \neq \emptyset$. Indeed, there exists $x \in K \setminus L$ and $K,L$ are compact, then there exists $r>0$ such that the ball $B(0,r)$ contains strictly  $K$ and $L$. Then, there exists a continuous curve $\gamma'$ from $x$ to the boundary of $B(0,r)$ that does not intersect $\partial L$ because of the connectedness of $\mathbb{R}^n \setminus \text{int} L$. Moreover $\gamma'$ must cross $\partial K$ in a point $v$ that does not belong to $L$, hence this point belongs to $\partial K \setminus L$.
	
	Finally, since $\partial K$ is connected, we can join $u,v$ with a curve contained in $\partial K$ from inside $L$ to outside of it, crossing $\partial L$ in at least one point $z \in \partial K \cap \partial L$.
\end{proof}

If $A$ is a connected compact set, then we call the \textit{external connected component} of $\mathbb{R}^n\setminus A$ the unbounded connected component of such a set. Then we notice that, as in \cite{9}, this result holds also for the boundary of the external connected component of $\mathbb{R}^n \setminus K$ and $\mathbb{R}^n \setminus L$. Moreover we point out that the hypothesis of Lemma \ref{lb} immediately rule out the case $n=1$. This will indeed be an issue in Corollary \ref{cb} and Theorem \ref{TKF}.

Now we can prove Theorem \ref{Tb}.
\begin{proof}[Proof of Theorem \ref{Tb}]
	Let $x \in K+L$, then there exist $\kappa \in K, \ell \in L$ such that $x=\kappa+\ell$. If we define $\tilde{K}:=K+x-\kappa, \tilde{L}:=-L+x+\ell$, we have that $x \in \tilde{K} \cap \tilde{L}$ hence $\tilde{K}$ and $\tilde{L}$ satisfy the hypothesis of Lemma \ref{lb}. Thus $\partial \tilde{K} \cap \partial \tilde{L} \neq \emptyset$. 
	
	Let $z \in \partial \tilde{K} \cap \partial \tilde{L}$, then \[z-x+\kappa \in \partial K, \ell-z+x \in \partial L.\] Now \[(z-x+\kappa)+(\ell-z+x)=\kappa+\ell,\] proving our assertion.
\end{proof}

\begin{corollary}\label{cb}
	Let $K \in \mathcal{K}^n$ and let $H$ be a subspace of $\mathbb{R}^n$. Then 
	\begin{equation}\label{ecb}
		M_H K=M_H \partial K,
	\end{equation}
	In particular, if $C \in \mathcal{C}^n$ and $C \supseteq \partial \text{conv}(C)$, then $M_H C$  is convex, and \[M_H \text{conv}(C)=M_H C.\]
	
	The same holds for fiber symmetrization if $H$ is not a hyperplane.
\end{corollary}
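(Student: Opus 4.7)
The plan is to deduce \eqref{ecb} as a direct application of Theorem \ref{Tb} to the pair $(K, R_H K)$, and then to bootstrap to the compact-set and fiber statements via monotonicity (and, in the fiber case, slicing).

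First I would dispose of the degenerate case: if $K$ has empty interior in $\mathbb{R}^n$, then $\partial K = K$, so \eqref{ecb} is trivial. Assume then that $K$ is a convex body with nonempty interior (with $n \geq 2$ implicit, since the case $n=1$ already fails for $K=[a,b]$). Both $\partial K$ and $\partial R_H K = R_H \partial K$ are homeomorphic to $S^{n-1}$ and hence connected, verifying one hypothesis of Theorem \ref{Tb}. For the other, for every $x \in \mathbb{R}^n$ both $K+x$ and $-R_H K$ are convex bodies of the common volume $\abs{K}$, and strict nesting of convex bodies of equal positive volume is impossible (a standard cone argument: a point $y$ in the larger body but outside the smaller, together with an interior point $p$ of the smaller and a ball $B(p,r)$ inside it, spans $\text{conv}(B(p,r)\cup\{y\})$ lying in the larger body and contributing positive measure near $y$ that is disjoint from the smaller, contradicting equality of volumes). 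Theorem \ref{Tb} then gives $K + R_H K = \partial K + R_H \partial K$, and dividing by $2$ is precisely \eqref{ecb}.

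For the compact-set statement, put $K := \text{conv}(C)$. The inclusions $\partial K \subseteq C \subseteq K$, combined with monotonicity of $M_H$ and \eqref{ecb}, sandwich $M_H C$ between two copies of $M_H K$, forcing $M_H C = M_H \text{conv}(C)$; in particular $M_H C$ is convex.

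For the fiber version, I would work slice by slice. Since $H$ is not a hyperplane, each slice $K_x := K \cap (H^\perp + x)$ lives in an affine space of dimension $n - i \geq 2$; the already-proven Minkowski identity applied there yields $M_{H^\perp,x}(K_x) = M_{H^\perp,x}(\partial' K_x)$, where $\partial' K_x$ denotes the boundary of $K_x$ relative to $H^\perp + x$. The chain $\partial' K_x \subseteq \partial K \cap (H^\perp + x) \subseteq K_x$, together with monotonicity of $M_{H^\perp,x}$, then forces $M_{H^\perp,x}(\partial K \cap (H^\perp + x)) = M_{H^\perp,x}(K_x)$, and taking unions over $x \in H$ gives $F_H K = F_H \partial K$; the compact-set statement for $F_H$ follows by the same sandwich argument. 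The principal obstacle is the verification of the non-strict-containment hypothesis of Theorem \ref{Tb}, which rests on the standard fact that convex bodies of equal positive volume cannot be strictly nested; once this is granted, the rest is routine monotonic sandwiching.
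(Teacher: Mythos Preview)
Your proposal is correct and follows essentially the same route as the paper: apply Theorem \ref{Tb} to $K$ and $R_H K$ (the paper uses $K/2$ and $R_H K/2$, which is the same), invoke equal volume to rule out strict nesting, then use monotonicity to sandwich $M_H C$; for the fiber case, reduce to the Minkowski identity on each $(n-i)$-dimensional slice with $n-i\ge 2$. Your treatment is in fact slightly more careful than the paper's---you explicitly handle the empty-interior case and spell out the chain $\partial' K_x \subseteq \partial K \cap (H^\perp + x) \subseteq K_x$ in the fiber argument---but the underlying strategy is identical.
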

\begin{proof}
	We first prove the result regarding Minkowski symmetrization. We apply Theorem \ref{Tb} to $K/2$ and $R_H K/2$. Indeed, observe that the two sets are convex, hence with connected boundary. Moreover, since they have the same volume, no translate of one set is strictly contained in the other set. Then, Theorem \ref{Tb} yields $M_H K=M_H \partial K$.
	
	Consider now a set $C \in \mathcal{C}^n$ with $\partial \text{conv}(C) \subseteq C$. From equation (\ref{ecb}), $\partial \text{conv}(C) \subseteq \partial C$ and the monotonicity of Minkowski symmetrization, we infer \[M_H C \supset \frac{\partial C + \partial R_H C}{2} \supseteq \frac{\partial \text{conv}(C)+ \partial R_H \text{conv}(C)}{2} = M_H \text{conv}(C).\] Since the reverse inclusion trivial, this concludes the proof in the case of Minkowski symmetrization.
	
	Regarding fiber symmetrization notice that if $H$ was a hyperplane then the sections are one-dimensional and in Lemma \ref{lemmone} we proved that we need certain conditions on the boundary to obtain idempotency. 
	In general we know that fiber symmetrization preserves convexity, thus $F_H \text{conv} C$ is convex, and its boundary is given by the union of the boundaries of the sections by $H^\perp + x, x \in H$. If $H$ is not a hyperplane, these sections are obtained by Minkowski symmetrization of convex sets of dimension greater or equal than two, completing the proof.
\end{proof}

The proof of Theorem \ref{TKF} now follows immediately.

\begin{proof}[Proof of Theorem \ref{TKF}]
	By Corollary \ref{cb} we have $F_{H_1} K=F_{H_1} \text{conv}K$. Therefore $F_{H_1} K \in \mathcal{K}_n^n$, and it suffices to apply to for the rest of the sequence the generalization of Klain's Theorem for fiber Symmetrization (cf. \cite{3}, Theorem 5.6), proving the Theorem.
\end{proof}

We conclude this section with another immediate application, a small addition to Klartag's following result; cf. Theorem 1.1 in \cite{25}. The same generalization holds for similar results in \cite{28}.
\begin{theorem}
	Let $n \geq 2, 0 < \epsilon< 1/2$, and let $K \subset \mathbb{R}^n $ be a compact set such that $K \supseteq \partial \text{conv} K$. Then there exist $cn \log 1/\epsilon $ Minkowski symmetrizations with respect to hyperplanes, that transform $K$ into a body $\tilde{K}$ that satisfies \[(1-\epsilon) w(K) B^n \subset \tilde{K} \subset (1+\epsilon) w(K) B^n,\] where $c>0$ is some numerical constant.
\end{theorem}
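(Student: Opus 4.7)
The plan is to reduce the compact-set statement to Klartag's original convex-body theorem (Theorem 1.1 in \cite{25}) by invoking Corollary \ref{cb}. The key observation is that the hypothesis $K \supseteq \partial \text{conv}(K)$ is precisely what makes the very first Minkowski symmetrization with respect to any hyperplane $H_1$ satisfy $M_{H_1} K = M_{H_1} \text{conv}(K)$; since the right-hand side is a Minkowski combination of two convex bodies, it lies in $\mathcal{K}_n^n$. Thus after one hyperplane symmetrization the iterated process is already confined to the class of convex bodies, and all of Klartag's convex machinery becomes available.

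With this reduction in hand, I would apply Klartag's theorem directly to the convex body $\text{conv}(K) \in \mathcal{K}_n^n$ (observe that $K \supseteq \partial\text{conv}(K)$ forces $\text{conv}(K)$ to have nonempty interior unless $K$ is a single point, in which case the statement is trivial). This yields hyperplanes $H_1, \ldots, H_N$ with $N \leq cn \log(1/\epsilon)$ such that
\[
(1-\epsilon)\, w(\text{conv}(K))\, B^n \subset M_{H_N}\cdots M_{H_1} \text{conv}(K) \subset (1+\epsilon)\, w(\text{conv}(K))\, B^n.
\]
Setting $\tilde{K} := M_{H_N}\cdots M_{H_1} K$, Corollary \ref{cb} applied at the first step gives $M_{H_1} K = M_{H_1} \text{conv}(K)$, and since all subsequent symmetrizations act on the same convex set, an easy induction yields $\tilde{K} = M_{H_N}\cdots M_{H_1} \text{conv}(K)$.

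Finally, I would note that $w(K) = w(\text{conv}(K))$: the support function satisfies $h_K(x) = \sup_{y\in K} x\cdot y = \sup_{y\in \text{conv}(K)} x\cdot y = h_{\text{conv}(K)}(x)$, and the mean width is defined as an integral of this support function. Substituting in the displayed inclusions above produces the desired bounds with $w(K)$ in place of $w(\text{conv}(K))$. There is no real obstacle here; the entire nontrivial content is absorbed into Corollary \ref{cb}, and the step one would need to double-check most carefully is simply that a single preliminary hyperplane symmetrization is enough to convexify $K$, which is exactly what the hypothesis $K \supseteq \partial\text{conv}(K)$ guarantees.
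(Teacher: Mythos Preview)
Your proposal is correct and follows essentially the same route as the paper: use Corollary \ref{cb} so that the very first hyperplane symmetrization already yields $M_{H_1}K=M_{H_1}\text{conv}(K)\in\mathcal{K}^n$, and then invoke Klartag's original theorem for the remainder of the sequence. Your write-up is in fact more explicit than the paper's, since you spell out the identity $w(K)=w(\text{conv}(K))$ via $h_K=h_{\text{conv}(K)}$, a step the paper leaves implicit. One small inaccuracy worth fixing: the claim that $K\supseteq\partial\,\text{conv}(K)$ forces $\text{conv}(K)$ to have nonempty interior ``unless $K$ is a single point'' is false (e.g.\ a segment in $\mathbb{R}^n$, $n\ge2$, satisfies the hypothesis but has empty interior); this does not affect the argument, however, since Klartag's bound and the mean-width identity apply equally to degenerate convex sets.
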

\begin{proof}
	First we consider the sequence given by the original statement of this theorem for the convex body $\text{conv}M_H K$.
	As we have proved in Theorem \ref{Tb}, applying the first symmetrization the resulting body will be $\text{conv}M_H K$. Proceeding with the sequence as in the original result, we conclude the proof.
\end{proof}

\section*{Acknowledgements}

The author would like to thank Gabriele Bianchi and Paolo Gronchi for the insightful help and the inputs for this work, which started as a Master Degree Thesis.


\begin{thebibliography}{9}
\bibitem{20} R. Asad, A. Burchard, \textit{Steiner symmetrization along a certain equidistributed sequence of directions}, arXiv:2005.13597 [math.MG], (2020).
\bibitem{11} G. Bianchi, A. Burchard, P. Gronchi, A. Volcic, \textit{Convergence in Shape of Steiner Symmetrization} Indiana Univ. Math. J. \textbf{61}, (2012), 1695-1710.
\bibitem{10}  G. Bianchi, R. J. Gardner, P. Gronchi, \textit{Symmetrization in Geometry}, Adv. in Math. \textbf{306}, (2016), 51-88.
\bibitem{3} G. Bianchi, R. J. Gardner, P. Gronchi, \textit{Convergence Of Symmerization Processes}, arXiv:1908.03259 [math.MG],(2019).
\bibitem{4} G. Bianchi, D. A. Klain, E. Lutwak, D. Yang, G. Zhang, \textit{A countable set of directions is sufficient for Steiner symmetrization}, Adv. in Appl. Math. (47) \textbf{4}, (2011), 869-873.
\bibitem{17} W. Blaschke, \textit{Kreis und Kugel}, 2nd edn, Walter de Gruyter e Co., Berlino, (1956).
\bibitem{23} D. Coupier, Yu. Davydov, \textit{Random Symmetrization of Convex Bodies}, Adv. in Appl. Prob. \textbf{46}, (2014), 603-621.
\bibitem{8} M. Fradelizi, M. Madiman, A. Marsiglietti, A. Zvavitch, \textit{The convexification effect of Minkowski summation}, EMS Surv. Math. Sci. \textbf{5} (2018), 1–64.
\bibitem{9}M. Fradelizi, Z. Lángi, A. Zvavitch, \textit{Volume of the Minkowski sums of Star-shaped sets},	arXiv:1910.06146 [math.MG], (2020).
\bibitem{1} R. J. Gardner, \textit{Tomographic Geometry}, 2nd Edition, Cambridge University Press, (2006).
\bibitem{27} R.J. Gardner, \textit{The Brunn-Minkowski Inequality}, Bull. Amer. Math. Soc. \textbf{39}, 355-405, (2002).
\bibitem{14} P.M. Gruber, \textit{Convex and Discrete Geometry}, Springer-Verlag Berlin Heidelberg, (2007).
\bibitem{6} D. A. Klain, \textit{Steiner Symmetrization Using a Finite Set of Directions}, Adv. in Appl. Math. \textbf{48}, (2012), 340-353.
\bibitem{25} B. Klartag, \textit{Rate of convergence of geometric symmetrizations}, GAFA, Geom. Funct. Anal. \textbf{14}, 1322–1338 (2004).
\bibitem{28} B. Klartag \textit{5n Minkowski symmetrizations suffice to arrive at an approximate Euclidean ball}, Annals of Mathematics \textbf{156}, 947-960, (2002).
\bibitem{2} S.R. Lay, \textit{Convex sets and their applications}, Wiley, (1982).
\bibitem{21} J. Matoušek, \textit{On directional convexity}, Discrete Comput. Geom. \textbf{25}, (2001), 389–403.
\bibitem{5} E. Milman, A. Yehudayoff, \textit{Sharp Isoperimetric Inequalities for Affine Quermassintegrals}, arXiv:2005.04769 [math.MG], (2020).
\bibitem{26} C. Saroglou, \textit{On some problems concerning symmetrization operators}, Forum Mathematicum \textbf{31}, (2018).
\bibitem{7} Rolf Schneider, \textit{Convex bodies: the Brunn-Minkowski theory},Second expanded edition. Encyclopedia of Mathematics and its Applications, 151. Cambridge University Press, Cambridge, (2014).
\bibitem{24} R. Starr, \textit{Quasi-Equilibria in Markets with Non-Convex Preferences}, Econometrica \textbf{37}, (1969): 25-38.
\bibitem{19} A. Volcic, \textit{On iterations of Steiner symmetrizations}, Ann. Mat. Pura Appl. \textbf{195} (2016), 1685–1692.

\end{thebibliography}
\end{document}